\begin{document}

	\newtheorem{theorem}{Theorem}[section]
	\newtheorem{prop}[theorem]{Proposition}
	\newtheorem{lemma}[theorem]{Lemma}
	\newtheorem{cor}[theorem]{Corollary}
	\newtheorem{prob}[theorem]{Problem}
	\newtheorem{defn}[theorem]{Definition}
	\newtheorem{notation}[theorem]{Notation}
	\newtheorem{fact}[theorem]{Fact}
	\newtheorem{conj}[theorem]{Conjecture}
	\newtheorem{claim}[theorem]{Claim}
	\newtheorem{example}[theorem]{Example}
	\newtheorem{rem}[theorem]{Remark}
	\newtheorem{assumption}[theorem]{Assumption}
	\newtheorem{scholium}[theorem]{Scholium}

	\newcommand{\map}{\rightarrow}
	\newcommand{\C}{\mathcal C}
	\newcommand\AAA{{\mathcal A}}
	\def\AA{\mathcal A}
	
	\def\L{{\mathcal L}}
	\def\al{\alpha}
	\def\A{{\mathcal A}}

	\newcommand\GB{{\mathbb G}}
	\newcommand\BB{{\mathcal B}}
	\newcommand\DD{{\mathcal D}}
	\newcommand\EE{{\mathcal E}}
	\newcommand\FF{{\mathcal F}}
	\newcommand\GG{{\mathcal G}}
	\newcommand\HH{{\mathbb H}}
	\newcommand\II{{\mathcal I}}
	\newcommand\JJ{{\mathcal J}}
	\newcommand\KK{{\mathcal K}}
	\newcommand\LL{{\mathcal L}}
	\newcommand\MM{{\mathcal M}}
	\newcommand\NN{{\mathbb N}}
	\newcommand\OO{{\mathcal O}}
	\newcommand\PP{{\mathcal P}}
	\newcommand\QQ{{\mathbb Q}}
	\newcommand\RR{{\mathbb R}}
	\newcommand\SSS{{\mathcal S}}
	\newcommand\TT{{\mathcal T}}
	\newcommand\UU{{\mathcal U}}
	\newcommand\VV{{\mathcal V}}
	\newcommand\WW{{\mathcal W}}
	\newcommand\XX{{\mathcal X}}
	\newcommand\YY{{\mathcal Y}}
	\newcommand\ZZ{{\mathcal Z}}
	\newcommand\hhat{\widehat}
	\newcommand\flaring{{Corollary \ref{cor:super-weak flaring} }}
	\newcommand\pb{\bar{p}_B}
	\newcommand\pp{\bar{p}_{B_1}}
	\newcommand{\eg}{\overline{EG}}
	\newcommand{\eh}{\overline{EH}}
	\def\Ga{\Gamma}
	\def\Z{\mathbb Z}
	
	\def\diam{\operatorname{diam}}
	\def\dist{\operatorname{dist}}
	\def\hull{\operatorname{Hull}}
	\def\id{\operatorname{id}}
	\def\Im{\operatorname{Im}}
	
	\def\barycenter{\operatorname{center}}

	\def\length{\operatorname{length}}
	\newcommand\RED{\textcolor{red}}
	\newcommand\BLUE{\textcolor{blue}}
	\newcommand\GREEN{\textcolor{green}}
	\def\mini{\scriptsize}
	
	\def\acts{\curvearrowright}
	\def\embed{\hookrightarrow}
	
	\def\ga{\gamma}
	\newcommand\la{\lambda}
	\newcommand\eps{\epsilon}
	\def\geo{\partial_{\infty}}
	\def\bhb{\bigskip\hrule\bigskip}

	\title[Finite relative height of splitting]{ A combination theorem for relatively hyperbolic groups and finite relative height of splitting}

	\author{Ravi Tomar}
	\email{ravitomar547@gmail.com}
	\address{Department of Mathematical Sciences,
		Indian Institute of Science Education and Research Mohali,
		Knowledge City, Sector 81, SAS Nagar,
		Punjab 140306,  India}
	
	

	\subjclass[2010]{20F65, 20F67}

	\keywords{Relatively hyperbolic groups, relatively quasi-convex subgroups, Bass-Serre tree}
	
	\date{\today}
	\maketitle
	\begin{abstract}
		In this paper, we prove a combination theorem for a relatively acylindrical graph of relatively hyperbolic groups (Theorem \ref{combination theorem}). Here, we are extending the technique of \cite{tomar} and constructing Bowditch boundary of the fundamental group of graph of groups. Suppose $G(\YY)$ is a graph of relatively hyperbolic groups such that edge groups are relatively quasi-convex in adjacent vertex groups. Also, assume that the fundamental group of $G(\YY)$ is relatively hyperbolic. Then we show that the edge groups of $G(\YY)$ have finite relative height (Definition \ref{rel height}) if and only if they are relatively quasi-convex (Theorem \ref{maintheorem}). In the last section, we give an application. 
	\end{abstract}
	\section{Introduction}
	In his seminal work \cite{gromov-hypgps}, Gromov introduced the notion of hyperbolic and relatively hyperbolic groups. In \cite{BF}, Bestvina-Feighn proved a celebrated combination theorem for a graph of hyperbolic groups. Using the theorem of Bestvina and Feighn, Mj and Reeves \cite{mahan-reeves} proved a combination theorem for a graph of relatively hyperbolic groups. In \cite{dahmani-comb}, with a completely different technique, Dahmani also proved a combination theorem for a graph of  relatively hyperbolic groups. Dahmani used Yaman's theorem \cite{yaman-relhyp} to prove his theorem and gave an explicit construction of Bowditch boundary, see \cite{bowditch-relhyp}. In Dahmani's theorem, one additionally requires that the action of the fundamental group of the graph of groups on its Bass-Serre tree is acylindrical and edge groups are fully quasi-convex in adjacent vertex groups. Our paper is also in the same spirit as \cite{dahmani-comb} and \cite{tomar}. Here, we are removing the hypothesis from the theorem of Dahmani that edge groups are fully quasi-convex. Also, we replace acylindrical action with a more general notion, namely relatively acylindrical action (Definition \ref{rel acyl}). In section \ref{4}, we prove the following combination theorem for relatively hyperbolic groups.
	\begin{theorem}\label{combination theorem}
		Let $G(\YY)$ be a finite graph of groups satisfying the following:
		\begin{enumerate}
			\item vertex groups are relatively hyperbolic, and edge groups are relatively quasi-convex in adjacent vertex groups,
			\item the induced parabolic structures on each edge group from adjacent vertex groups are the same,
			\item action of $G=\pi_1(G(\YY))$ on Bass-Serre tree of $G(\YY)$ is relatively acylindrical,
			\item For $n\geq k$, $\bigcap\limits_{i=1}^{n}\Lambda(G_{e_i})=\emptyset$ if $\bigcap\limits_{i=1}^{n} G_{e_i}$ is finite and $\bigcap\limits_{i=1}^{n}\Lambda(G_{e_i})$ is a singleton if $\bigcap\limits_{i=1}^{n} G_{e_i}$ is infinite, where $k$ is some fixed natural number.
		\end{enumerate}
		Then $G$ is relatively hyperbolic with respect to  subgroups corresponding to natural cone-locus. Moreover, the vertex groups are relatively quasi-convex in $G$.
	\end{theorem}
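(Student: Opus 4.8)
The plan is to construct the Bowditch boundary of $G$ explicitly and then invoke Yaman's dynamical characterization of relative hyperbolicity \cite{yaman-relhyp}, following the strategy of Dahmani \cite{dahmani-comb} and the technique developed in \cite{tomar}. First I would build a candidate compactum $\partial G$ as follows. Let $T$ be the Bass-Serre tree of $G(\YY)$. To each vertex $v$ of $T$ with stabilizer $G_v$ I attach the Bowditch boundary $\partial G_v$, and I glue these vertex boundaries to one another along the limit sets $\Lambda(G_e)$ of the (relatively quasi-convex) edge groups sitting inside the adjacent vertex boundaries. The resulting set is completed by adjoining the boundary at infinity $\partial T$ of the tree together with the parabolic points coming from the cone-locus. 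I then equip this set with a topology built from shadows of half-trees and from neighborhoods inherited from the vertex boundaries, designed so that $\partial G$ is compact, metrizable and perfect.

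With $\partial G$ in hand, the second step is to verify that the natural action of $G$ on $\partial G$ is a convergence action. This amounts to checking the expected north-south dynamics for sequences of group elements, splitting into cases according to whether the translation lengths in $T$ stay bounded or diverge. When they diverge, convergence is governed by the tree dynamics; when they stay bounded, the relative acylindricity hypothesis (Definition \ref{rel acyl}) confines the relevant elements to finitely many vertex groups, where one uses that each $G_v$ already acts as a convergence group on $\partial G_v$.

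The third and crucial step is to establish geometric finiteness by classifying every limit point of $G$ on $\partial G$ as either conical or bounded parabolic. The parabolic points are exactly those associated with the natural cone-locus: the endpoints at infinity stabilized by infinite edge-group intersections, together with the parabolic points inherited from the vertex groups. Hypothesis (4)—that $\bigcap_i \Lambda(G_{e_i})$ is empty when $\bigcap_i G_{e_i}$ is finite and a singleton when it is infinite—is precisely what guarantees that these parabolic points are well defined and that their stabilizers act cocompactly on the complement, i.e.\ that they are bounded parabolic. Once geometric finiteness is verified, Yaman's theorem \cite{yaman-relhyp} yields that $G$ is relatively hyperbolic with Bowditch boundary $\partial G$ and peripheral structure given by the cone-locus subgroups.

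For the final assertion I would identify the limit set of each vertex group $G_v$ with the copy of $\partial G_v$ embedded in $\partial G$; relative quasi-convexity then follows from the dynamical characterization of relatively quasi-convex subgroups as those whose limit set embeds as the subgroup's boundary compatibly with the induced parabolic structure. The main obstacle throughout is the third step: because the edge groups are only relatively quasi-convex rather than fully quasi-convex, their limit sets can meet in nontrivial parabolic points, and controlling these intersections—ensuring the gluing produces a Hausdorff convergence compactum and that the parabolic stabilizers are exactly the predicted cone-locus subgroups—is where hypothesis (4) together with relative acylindricity must be used most carefully.
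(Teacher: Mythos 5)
Your proposal follows essentially the same route as the paper: build the compactum by gluing vertex boundaries along edge limit sets and adjoining $\partial T$ (with the identifications forced by infinite domains), prove the convergence property by splitting into large and small translation on $T$, verify geometric finiteness using relative acylindricity and hypothesis (4), and conclude via Yaman's theorem, with relative quasi-convexity of vertex groups read off from the embedded copies of $\partial G_v$. You also correctly pinpoint the main technical issue---controlling intersections of edge limit sets at parabolic points to get a Hausdorff convergence compactum---which is exactly where the paper's extra quotient of $X'$ and conditions (3)--(4) are used.
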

	In the above theorem, for an edge $e\in \YY$, $\Lambda(G_e)$ denotes the limit set (see Definition \ref{limit-set, rel qc}) of edge group $G_e$ in adjacent vertex groups. Also, the natural number $k$ is the constant of relative acylindricity. $\pi_1(G(\YY))$ denotes the fundamental group of the graph of groups $G(\YY)$.
	
	In Theorem \ref{combination theorem}, if we assume that edge groups are fully quasi-convex, then condition(4) is immediate as fully quasi-convex subgroups of a relatively hyperbolic group satisfy limit set intersection property (see \cite[Proposition 1.10]{dahmani-comb}). Thus, as a corollary of Theorem \ref{combination theorem}, we have the following:
	\begin{theorem}
		Let $G(\YY)$ be a finite relatively acylindrical graph of relatively hyperbolic groups such that edge groups are fully quasi-convex in adjacent vertex groups. Also, assume that $G(\YY)$ satisfies condition (2) of Theorem \ref{combination theorem}. Then the fundamental group of $G(\YY)$ is relatively hyperbolic with respect to subgroups corresponding to natural cone-locus.
	\end{theorem}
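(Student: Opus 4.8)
The plan is to deduce this statement directly from Theorem \ref{combination theorem} by verifying that the stronger hypothesis---edge groups fully quasi-convex---forces all four conditions of that theorem to hold. First I would dispatch the easy conditions: condition (1) is immediate because a fully quasi-convex subgroup is in particular relatively quasi-convex, so the edge groups are relatively quasi-convex in the adjacent vertex groups; condition (2) is assumed in the statement; and condition (3) is built into the phrase ``relatively acylindrical graph,'' which supplies the acylindricity constant $k$. Thus the entire content of the proof reduces to establishing condition (4).

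For condition (4), the key input is that fully quasi-convex subgroups of a relatively hyperbolic group satisfy the limit set intersection property \cite[Proposition 1.10]{dahmani-comb}. Since a finite intersection of fully quasi-convex subgroups is again fully quasi-convex, iterating this property gives
\[
\bigcap_{i=1}^{n}\Lambda(G_{e_i})=\Lambda\Big(\bigcap_{i=1}^{n}G_{e_i}\Big)
\]
for the collection $G_{e_1},\dots,G_{e_n}$ of (conjugates of) edge groups in question. This single identity is designed to yield both halves of condition (4) at once.

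The two cases then fall out as follows. If $\bigcap_{i=1}^{n}G_{e_i}$ is finite, its limit set is empty, so the displayed identity gives $\bigcap_{i=1}^{n}\Lambda(G_{e_i})=\emptyset$, as required. If $\bigcap_{i=1}^{n}G_{e_i}$ is infinite, I would argue that this intersection must be parabolic: with $n\geq k$, relative acylindricity prevents an infinite non-parabolic subgroup from stabilizing the corresponding configuration in the Bass-Serre tree, so the only infinite intersections that survive are parabolic. An infinite parabolic subgroup has as its limit set a single parabolic fixed point, and hence $\bigcap_{i=1}^{n}\Lambda(G_{e_i})=\Lambda(\bigcap_{i=1}^{n}G_{e_i})$ is a singleton. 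With condition (4) verified, Theorem \ref{combination theorem} applies and $\pi_1(G(\YY))$ is relatively hyperbolic with respect to the subgroups attached to the natural cone-locus.

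I expect the main obstacle to be the infinite case of condition (4): one must pin down precisely why an infinite intersection of $n\geq k$ edge groups is forced to be parabolic. This is exactly the interplay between the limit set intersection property (which identifies the intersection's limit set with the intersection of limit sets) and the relative acylindricity constant $k$ (which controls the size and type of stabilizers of long configurations in the tree). Making this rigorous requires unwinding Definition \ref{rel acyl} and checking that ``infinite and non-parabolic'' is incompatible with being the common stabilizer once $n\geq k$; the finite case and conditions (1)--(3), by contrast, are essentially bookkeeping.
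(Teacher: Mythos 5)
Your proposal matches the paper's argument: the paper obtains this statement as an immediate corollary of Theorem \ref{combination theorem}, observing only that conditions (1)--(3) are hypotheses and that condition (4) follows from the limit set intersection property of fully quasi-convex subgroups \cite[Proposition 1.10]{dahmani-comb}. Your extra care in the infinite case --- using relative acylindricity to force $\bigcap_{i=1}^{n}G_{e_i}$ to be parabolic so that its limit set is a singleton --- is a necessary detail that the paper passes over as ``immediate,'' but it is an elaboration of the same route rather than a different one.
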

	Another special case of Theorem \ref{combination theorem} is the following:
	\begin{cor}
		Let $G(\YY)$ be an acylindrical finite graph of relatively hyperbolic groups with relatively quasi-convex edge groups. Assume that, for $n\geq k$, if $\bigcap\limits_{i=1}^{n} G_{e_i}$ is finite then $\bigcap\limits_{i=1}^{n}\Lambda(G_{e_i})=\emptyset$. Then the fundamental group $G$ of $G(\YY)$ is relatively hyperbolic with respect to the collection of images of parabolic subgroups of vertex groups in $G$.
	\end{cor}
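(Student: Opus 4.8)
The plan is to obtain this corollary as a direct specialization of Theorem \ref{combination theorem}: I would check that the acylindricity hypothesis forces each of the four conditions of that theorem, apply it to conclude that $G$ is relatively hyperbolic with respect to the natural cone-locus, and then argue that under acylindricity the cone-locus contributes no peripheral subgroups beyond the images of the vertex-group parabolics. The content of the corollary is therefore almost entirely a matter of showing that the weaker hypotheses listed here already imply the full list of hypotheses in the combination theorem, together with an identification of the resulting peripheral structure.

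First I would note that conditions (1) and (2) of Theorem \ref{combination theorem} are part of the standing assumptions on a graph of relatively hyperbolic groups with relatively quasi-convex edge groups, and that an acylindrical action is a special (stronger) case of a relatively acylindrical one in the sense of Definition \ref{rel acyl}, so condition (3) holds as well. The decisive point is condition (4). Let $k$ be the acylindricity constant. By definition of acylindricity, any segment of the Bass-Serre tree of length at least $k$ has finite stabilizer; since for $n\geq k$ the relevant intersection $\bigcap\limits_{i=1}^{n} G_{e_i}$ is, up to conjugacy, the stabilizer of such a segment, it is finite for every $n\geq k$. Consequently the second alternative of condition (4) — the case in which $\bigcap\limits_{i=1}^{n} G_{e_i}$ is infinite — never occurs and is vacuously satisfied, while the first alternative, that $\bigcap\limits_{i=1}^{n}\Lambda(G_{e_i})=\emptyset$ whenever $\bigcap\limits_{i=1}^{n} G_{e_i}$ is finite, is exactly the hypothesis assumed in the corollary. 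Hence condition (4) holds in full.

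With all hypotheses verified, Theorem \ref{combination theorem} yields that $G$ is relatively hyperbolic with respect to the subgroups corresponding to the natural cone-locus, and that the vertex groups are relatively quasi-convex in $G$. It then remains to identify those cone-locus subgroups with the images of the parabolic subgroups of the vertex groups, and I expect this identification to be the main obstacle, since it requires unpacking how the peripheral structure is assembled in the proof of the combination theorem. The peripheral subgroups produced there are of two kinds: the images in $G$ of the parabolic subgroups of the vertex groups, and the subgroups arising from infinite chains of edge-group cosets, i.e.\ from infinite intersections $\bigcap\limits_{i} G_{e_i}$. The key is that, by the observation above, no intersection of $k$ or more edge groups is infinite, so no such infinite chains exist; every infinite maximal parabolic subgroup of $G$, and hence every parabolic point of the constructed Bowditch boundary, is therefore a translate of one coming from a single vertex group. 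This shows the natural cone-locus consists precisely of the images of the parabolic subgroups of the vertex groups, giving the stated peripheral structure.
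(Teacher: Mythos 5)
The paper offers no written proof of this corollary; it is asserted as an immediate specialization of Theorem \ref{combination theorem}, which is exactly the route you take, and your verification of conditions (1), (3) and (4) is essentially the intended one: acylindricity is the special case of Definition \ref{rel acyl} in which the long-segment stabilizers are finite, so only the first alternative of condition (4) can arise, and that alternative is the corollary's hypothesis. Two caveats. First, condition (2) (matching induced parabolic structures on the edge groups) is not a consequence of relative quasi-convexity and is not listed in the corollary either --- the paper silently assumes it here although it states it explicitly in the fully quasi-convex corollary --- so you should flag it rather than fold it into ``standing assumptions.'' Second, your claim that $\bigcap_{i=1}^{n} G_{e_i}$ is finite for all $n\geq k$ uses that the edges $e_1,\dots,e_n$ span a geodesic of length at least $k$; this holds for the collections relevant to the construction (consecutive edges in a domain) but not for arbitrary $n$ distinct edges (e.g.\ a star at one vertex, whose common stabilizer only stabilizes segments of length $2$), so you should make explicit which reading of condition (4) you are using.

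The genuine gap is in your last paragraph, where you identify the cone-locus with the images of the vertex-group parabolics. Your dichotomy --- peripheral subgroups are either images of single vertex-group parabolics or else arise from \emph{infinite} chains of edge groups --- is not the one the construction produces. The subgroups corresponding to the natural cone-locus are fundamental groups of subgraphs of groups whose vertex groups are maximal parabolics of the $G_v$, amalgamated along maximal parabolics of the edge groups. A subgraph supported on a single edge already yields a peripheral subgroup of the form $P_1\ast_{P_e}P_2$, with $P_i$ maximal parabolic in $G_{v_i}$ and $P_e$ an infinite parabolic subgroup of $G_e$; this occurs whenever an edge group contains an infinite parabolic, is entirely compatible with $k$-acylindricity for $k>1$, and is in general not conjugate into a single vertex group. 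So ruling out infinite chains does not reduce the cone-locus to translates of single vertex-group parabolics. Either the phrase ``images of parabolic subgroups of vertex groups in $G$'' must be read as shorthand for these amalgamated cone-locus subgroups (which is evidently how the paper intends it, since the corollary is obtained by specializing the theorem verbatim), or an additional hypothesis is needed. As written, your final step asserts an identification that your argument does not establish.
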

	In \cite{ilyakapovichcomb}, I. Kapovich proved that the fundamental group of an acylindrical graph of hyperbolic groups with quasi-convex edge groups is hyperbolic, and all vertex groups are quasi-convex in the fundamental group of the graph of groups. An analogue of this result in relatively hyperbolic group setting was proved by Dahmani \cite{dahmani-comb} with the additional hypothesis that edge groups are fully quasi-convex. In this article, we are removing this additional hypothesis by introducing the notion of relatively acylindrical action. Thus, Theorem \ref{combination theorem} is an analogous version of I.Kapovich's result for relatively hyperbolic groups.
	
	In \cite{tomar}, the author proved a combination theorem for a graph of relatively hyperbolic groups with parabolic edge groups and constructed the Bowditch boundary. There, the author generalizes the technique of \cite{dahmani-comb} to construct the Bowditch boundary. In this paper, we are considering the general case when edge groups are relatively quasi-convex. Here, we are extending the technique of \cite{tomar} to construct the Bowditch boundary of the fundamental group of $G(\YY)$ (as in Theorem \ref{combination theorem}). In \cite[Theorem 1.6]{tomar}, edge groups are parabolic, so their limit sets are singletons. Thus, only one point in the edge boundary can have an infinite domain, see Definition \ref{domain}. However, in our situation, there can be more than one parabolic point in an edge boundary with infinite domains. Note that only parabolic points can have infinite domains by the third condition in Theorem \ref{combination theorem}. In the construction of Bowditch boundary, we also require that the intersection of infinite domains is uniformly finite. It can be achieved by the last two conditions in Theorem \ref{combination theorem}. To prove the above theorem, we introduced the notion of relatively acylindrical action, see Definition \ref{rel acyl}, which generalizes the notion of acylindrical action.
	\begin{rem}
		In Theorem \ref{combination theorem}, we are not assuming that the monomorphisms from edge groups to vertex groups are quasi-isometric embeddings. Thus, condition (1) of \cite[Theorem 4.5]{mahan-reeves} is not satisfied. Therefore, Theorem \ref{combination theorem} does not follow from the combination theorem of Mj-Reeves \cite{mahan-reeves}.
	\end{rem}
	
	Let $(G,\PP)$ be a relatively hyperbolic group. An element $f\in G$ is {\em elliptic} if the order of $f$ is finite. If $f$ has infinite order, then $f$ is {\em parabolic} if it is contained in a conjugate of some parabolic subgroup $P\in \PP$ and {\em loxodromic} otherwise. The notion of the height of a subgroup was introduced in \cite{GMRS}. There, they proved that quasi-convex subgroups of a hyperbolic group have finite height. This notion was generalized by Hruska and Wise \cite{hruska-wise} in the setting of relatively hyperbolic groups. In \cite{hruska-wise}, the authors proved that relatively quasi-convex subgroups of a relatively hyperbolic group have finite relative height. Now, we recall the definition of the relative height of a subgroup of a relatively hyperbolic group.
	
	\begin{defn}[Relative height\cite{hruska-wise}]\label{rel height}
		Let $(G,\PP)$ be a relatively hyperbolic group and let $H$ be a subgroup of $G$. Relative height of $H$ is $n$, if $n$ is the maximum number of distinct cosets $g_iH$ such that $\bigcap\limits_{i=1}^{n}g_iHg_i^{-1}$ contains a loxodromic element.
	\end{defn} 
	
	It is well known that quasi-convex subgroups of a hyperbolic group have finite height \cite{GMRS}. The converse of this fact is an open question. This question is open even when the subgroup is malnormal (subgroup with height $1$). In \cite{mahan-height-split}, Mitra proved that when a hyperbolic group $G$ split as an amalgamated free product or HNN-extension of hyperbolic groups with quasi-convex edge groups, edge groups have finite height in $G$ if and only if they are quasi-convex in $G$. Recently, Pal \cite{pal-rel-height} generalized the work of Mitra \cite{mitra-ht} in the setting of relatively hyperbolic groups. The following theorem generalizes the work of \cite{pal-rel-height}.
	\begin{theorem}\label{maintheorem}
		Let $G(\YY)$ be a finite graph of groups satisfying the following:
		\begin{enumerate}
			\item vertex groups are relatively hyperbolic, and edge groups are relatively quasi-convex in adjacent vertex groups,
			\item the induced parabolic structures on each edge group from adjacent vertex groups are the same,
			\item $G=\pi_1(G(\YY))$ is relatively hyperbolic with respect to subgroups corresponding to natural cone-locus,
			\item For $n\geq k$, $\bigcap\limits_{i=1}^{n}\Lambda(G_{e_i})=\emptyset$ if $\bigcap\limits_{i=1}^{n} G_{e_i}$ is finite and $\bigcap\limits_{i=1}^{n}\Lambda(G_{e_i})$ is a singleton if $\bigcap\limits_{i=1}^{n} G_{e_i}$ is infinite, where $k$ is some fixed natural number.
		\end{enumerate}
		Then, the edge groups have finite relative height in $G$ if and only if they are relatively quasi-convex in $G$.
	\end{theorem}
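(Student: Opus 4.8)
The plan is to establish the two implications separately. The forward direction (relative quasi-convexity $\Rightarrow$ finite relative height) is a direct citation: by Hruska--Wise \cite{hruska-wise}, every relatively quasi-convex subgroup of a relatively hyperbolic group has finite relative height, and since $(G,\PP)$ is relatively hyperbolic by hypothesis~(3), relatively quasi-convex edge groups automatically have finite relative height. The reverse direction carries all the weight and will be reduced to the combination theorem (Theorem~\ref{combination theorem}).

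For the reverse direction, the key is to show that finite relative height of the edge groups forces the action of $G$ on the Bass--Serre tree $T$ to be relatively acylindrical (Definition~\ref{rel acyl}). Let $N$ be the sum of the relative heights of the finitely many edge groups. If a loxodromic element $g\in G$ fixes a geodesic segment $\sigma\subset T$ pointwise, then $g$ lies in the stabilizer of every edge of $\sigma$. The edges of a fixed $G$-orbit correspond bijectively to cosets of the associated edge group $G_e$, so those lying on $\sigma$ yield distinct cosets $g_iG_e$ with $g\in\bigcap_i g_iG_eg_i^{-1}$; by definition of relative height there are at most $\mathrm{height}(G_e)$ of them. Summing over the finitely many edge-orbits bounds the length of $\sigma$ by $N$. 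Hence any segment of length exceeding $N$ has a pointwise stabilizer containing no loxodromic element, and by the convergence-group trichotomy for subgroups of a relatively hyperbolic group such a stabilizer is either finite or parabolic, fixing a unique point of $\partial G$. This is exactly relative acylindricity, with constant $N+1$.

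With relative acylindricity established, hypotheses (1)--(4) of Theorem~\ref{combination theorem} all hold, and applying it shows that each vertex group is relatively quasi-convex in $G$, with the resulting peripheral structure the one named in hypothesis~(3). To conclude, I would appeal to transitivity of relative quasi-convexity: each edge group is relatively quasi-convex in an adjacent vertex group by hypothesis~(1), each vertex group is relatively quasi-convex in $G$ by the previous step, and hypothesis~(2)---equality of the induced peripheral structures---makes these inclusions compatible. Composing them shows every edge group is relatively quasi-convex in $G$.

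The main obstacle is the passage from finite relative height to relative acylindricity. One must make precise that a loxodromic fixing a long segment is trapped in a common conjugate intersection of too many distinct cosets, treating the several $G$-orbits of edges simultaneously and verifying that the counted cosets are genuinely distinct. The associated dichotomy---finite pointwise stabilizers versus infinite parabolic ones---is precisely what hypothesis~(4) governs through the limit sets $\Lambda(G_{e_i})$, demanding empty intersection in the finite case and a singleton in the infinite case, so that all the combination-theorem hypotheses are genuinely met. Transitivity of relative quasi-convexity, though standard, relies on the matching of peripheral structures from hypothesis~(2) and is the remaining technical point.
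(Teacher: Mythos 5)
Your proposal is correct and follows essentially the same route as the paper: the forward direction is the Hruska--Wise citation, and the reverse direction proceeds by showing finite relative height forces relative acylindricity of the action on the Bass--Serre tree (the paper's Lemma~\ref{rel height rel acyl}), invoking Theorem~\ref{combination theorem} to get relative quasi-convexity of the vertex groups, and then passing to the edge groups by transitivity of relative quasi-convexity (the paper cites \cite[Lemma 2.3]{bigdely-wise} for this last step). Your counting argument for the height-to-acylindricity implication is in fact more detailed than the paper's.
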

	In Theorem \ref{maintheorem}, if we assume that the edge groups are fully quasi-convex, then condition(4) is immediate. Thus, we immediately have the following:
	\begin{theorem}
		Let $G(\YY)$ be a finite graph of relatively hyperbolic groups with fully quasi-convex edge groups. Also, assume that $G(\YY)$ satisfies conditions (2) and (3) of Theorem \ref{maintheorem}. Then edge groups have finite relative height in $G$ if and only if they are relatively quasi-convex in $G$.
	\end{theorem}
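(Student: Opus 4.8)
The plan is to deduce this statement directly from Theorem~\ref{maintheorem}; the only real work is to check that the four hypotheses of that theorem hold in the present setting. Conditions (2) and (3) are assumed here verbatim. For condition (1), one simply notes that a fully quasi-convex subgroup is in particular relatively quasi-convex, while the relative hyperbolicity of the vertex groups is part of the hypothesis. Everything therefore reduces to verifying condition (4), and this is the step where full quasi-convexity is genuinely used.

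To establish condition (4), I would invoke the limit set intersection property for fully quasi-convex subgroups, \cite[Proposition~1.10]{dahmani-comb}: for fully quasi-convex $A$ and $B$ one has $\Lambda(A\cap B)=\Lambda(A)\cap\Lambda(B)$. Since a finite intersection of fully quasi-convex subgroups is again fully quasi-convex, an induction on the number of factors upgrades this to
\[
\bigcap_{i=1}^{n}\Lambda(G_{e_i})=\Lambda\Bigl(\bigcap_{i=1}^{n}G_{e_i}\Bigr)
\]
for every finite family of (conjugates of) edge groups. The content of condition (4) is then read off from this identity together with two standard facts about limit sets in the Bowditch boundary: the limit set of a subgroup is empty exactly when the subgroup is finite, and the limit set of a parabolic subgroup is a single point.

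Concretely, fix $n\ge k$ and distinguish the two cases of condition (4). If $\bigcap_{i=1}^{n}G_{e_i}$ is finite, the displayed identity gives $\bigcap_{i=1}^{n}\Lambda(G_{e_i})=\emptyset$, which is the first alternative. If $\bigcap_{i=1}^{n}G_{e_i}$ is infinite, it remains to see that its limit set is a singleton, for then the identity forces $\bigcap_{i=1}^{n}\Lambda(G_{e_i})$ to be exactly that point.

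The main obstacle is precisely this last point: one must argue that a sufficiently deep infinite intersection of edge groups is parabolic, so that its limit set is a single parabolic fixed point. This is where the constant $k$ and the acylindrical character of the splitting enter, the depth bound $n\ge k$ being what confines infinite intersections to parabolics and rules out larger limit sets. Once this group-theoretic dichotomy---finite or parabolic for $n\ge k$---is in place, the limit set intersection property does the rest, translating it into the empty/singleton dichotomy of condition (4). With condition (4) verified, Theorem~\ref{maintheorem} applies verbatim and yields the desired equivalence between finite relative height and relative quasi-convexity of the edge groups.
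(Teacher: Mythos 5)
Your reduction is exactly the paper's: the paper disposes of this statement in one line, asserting that for fully quasi-convex edge groups condition (4) of Theorem \ref{maintheorem} is ``immediate'' from the limit set intersection property of \cite[Proposition 1.10]{dahmani-comb}, and your identity $\bigcap_{i}\Lambda(G_{e_i})=\Lambda\bigl(\bigcap_{i}G_{e_i}\bigr)$ together with ``finite subgroup $\Rightarrow$ empty limit set'' is precisely that argument, carried out in more detail. You have also correctly isolated the only non-trivial half of condition (4): that an infinite intersection of $n\geq k$ edge stabilizers must have singleton limit set, i.e.\ must be parabolic. Up to that point you agree with, and are more careful than, the paper.

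That last half is where your write-up has a genuine gap, and it is one the paper silently shares. You propose to get parabolicity of deep infinite intersections from ``the acylindrical character of the splitting'', but no acylindricity of any kind is among the hypotheses of this statement: conditions (2) and (3) of Theorem \ref{maintheorem} give only compatibility of parabolic structures and relative hyperbolicity of $G$. In this paper relative acylindricity is only ever \emph{derived} from finite relative height of the edge groups (Lemma \ref{rel height rel acyl}), which is the hypothesis of one direction of the very equivalence you are proving; so condition (4) cannot be verified unconditionally, and Theorem \ref{maintheorem} cannot be applied ``verbatim'' after checking its hypotheses once and for all. The repair is to note that condition (4) enters the proof of Theorem \ref{maintheorem} only in the direction ``finite relative height $\Rightarrow$ relatively quasi-convex'' (the converse is \cite{hruska-wise} and needs nothing): assume finite relative height, invoke Lemma \ref{rel height rel acyl} to obtain relative acylindricity, then argue that a deep infinite intersection is parabolic and conclude via the limit set identity. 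You should either make this logical reordering explicit or supply an argument for the parabolicity of deep infinite intersections that uses only the stated hypotheses.
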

	In \cite{pal-rel-height}, Pal assumed that the monomorphisms from edge groups to vertex groups are quasi-isometric embeddings. Also, he took a more restrictive parabolic structure (like in \cite{dahmani-comb}) on $\pi_1(G(\YY))$. In Theorem \ref{maintheorem}, we are relaxing these assumptions and taking a more general parabolic structure (like in \cite{mahan-reeves}) on $\pi_1(G(\YY))$.
	
	The following corollary of Theorem \ref{maintheorem} shows that, under a mild extra assumption, we can obtain \cite[Theorem 1.2]{pal-rel-height}. Note that we are not assuming that the monomorphisms from edge groups to vertex groups are qi embeddings.
	\begin{cor} 
		Let $G(\YY)$ be a finite graph of relatively hyperbolic groups with relatively quasi-convex edge groups. Assume that the fundamental group $G$ of $G(\YY)$ is relatively hyperbolic with respect to the collection of images of parabolic subgroups of vertex groups in $G$. Also, assume that, for $n\geq k$, if $\bigcap\limits_{i=1}^{n} G_{e_i}$ is finite then $\bigcap\limits_{i=1}^{n}\Lambda(G_{e_i})=\emptyset$ where $k$ is a fixed natural number. Then, the edge groups have finite relative height in $G$ if and only if they are relatively quasi-convex in $G$.
	\end{cor}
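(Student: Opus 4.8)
The plan is to obtain the corollary as a direct application of Theorem~\ref{maintheorem}, so the entire task reduces to checking its four hypotheses under the present assumptions. Condition~(1) is assumed verbatim. For condition~(3) I would identify the two peripheral structures: the family of images of the parabolic subgroups of the vertex groups is exactly the peripheral structure attached to the natural cone-locus, since by construction the cone-locus records precisely the conjugacy classes of these images. Condition~(2) is then forced rather than extra: for $G$ to be relatively hyperbolic with respect to this family the images must assemble into a well-defined peripheral structure, and this coherence requires, for each edge $e$ with endpoints $v$ and $w$, that the parabolic structure induced on $G_e$ from $G_v$ coincide with the one induced from $G_w$. With (1)--(3) in hand, the finite half of condition~(4) is precisely the extra hypothesis assumed in the corollary.

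What remains is the infinite half of condition~(4): for $n\geq k$, if $H:=\bigcap_{i=1}^{n}G_{e_i}$ is infinite then $\bigcap_{i=1}^{n}\Lambda(G_{e_i})$ is a singleton. Since a finite intersection of relatively quasi-convex subgroups is again relatively quasi-convex, $H$ is relatively quasi-convex; being infinite, it has nonempty limit set, and $\Lambda(H)\subseteq\bigcap_{i=1}^{n}\Lambda(G_{e_i})$ shows the right-hand side is nonempty. It therefore suffices to prove that $\bigcap_{i=1}^{n}\Lambda(G_{e_i})$ contains no second point, which I would do by showing that $H$ is parabolic, i.e.\ conjugate into an image of a vertex-group parabolic subgroup; its limit set is then the single associated parabolic fixed point $\xi$, and $\bigcap_{i=1}^{n}\Lambda(G_{e_i})=\{\xi\}$ follows.

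The step I expect to be the main obstacle is precisely the parabolicity of $H$, equivalently the absence of a loxodromic element in $H$. My approach would be to combine the convergence dynamics on $\partial G$ with the maximality of the chosen peripheral structure. By the standard convergence-group dichotomy, a subgroup with at least two limit points contains a loxodromic element; so if $\bigcap_{i=1}^{n}\Lambda(G_{e_i})$ had two distinct points $\xi\neq\eta$, I would try to realize them as the endpoints of a loxodromic axis inside $H$, using that every conical point common to all the $\Lambda(G_{e_i})$ should, for the maximal peripheral structure coming from the vertex groups, be attained along the common quasi-convex part $H$. The genuine difficulty is that $\bigcap_{i=1}^{n}\Lambda(G_{e_i})$ can a priori be strictly larger than $\Lambda(H)$, so the heart of the proof is a limit-set intersection estimate controlling this gap; the tree picture helps here, since each $G_{e_i}$ stabilizes a distinct edge of the Bass--Serre tree of $G(\YY)$ and a loxodromic in $H$ would fix all of these edges simultaneously, an arrangement that the finite-intersection hypothesis (through the constant $k$) is designed to forbid. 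Once the gap is closed and $H$ is shown to be parabolic, condition~(4) holds in full and Theorem~\ref{maintheorem} applies verbatim, yielding the asserted equivalence between finite relative height and relative quasi-convexity of the edge groups.
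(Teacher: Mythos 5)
The paper offers no separate proof of this corollary: it is presented as an immediate specialization of Theorem \ref{maintheorem}, the ``mild extra assumption'' being precisely the finite half of condition (4). Your overall strategy --- verify the four hypotheses of Theorem \ref{maintheorem} and apply it --- is therefore the right one, and you correctly isolate where the actual content lies. The problem is that at the two places where content is needed, the proposal asserts rather than proves.

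First, your justification of condition (3), that the images of the vertex-group parabolics ``are exactly'' the cone-locus subgroups ``by construction,'' is not true in general: the subgroups corresponding to the natural cone-locus are fundamental groups of subgraphs of groups and can be proper amalgams such as $P_1\ast_P P_2$ (this is visible in the paper's own treatment of parabolic points with infinite domain). Identifying the two peripheral structures, and likewise deducing condition (2), requires an argument --- for instance via almost-malnormality of the assumed peripheral family forcing these subgraphs to degenerate --- which you do not give. Second, and more seriously, the infinite half of condition (4), that $\bigcap_{i=1}^{n}\Lambda(G_{e_i})$ is a singleton whenever $\bigcap_{i=1}^{n}G_{e_i}$ is infinite, is flagged by you as ``the main obstacle'' but never closed, and the sketch points the wrong way. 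The parabolicity of $H=\bigcap_{i=1}^{n}G_{e_i}$ is in fact the easy part: once $n$ exceeds the (assumed finite) relative height, Definition \ref{rel height} says $H$ contains no loxodromic, and an infinite subgroup of a convergence group without loxodromic elements is parabolic with $\Lambda(H)$ a single point. The genuinely hard step, which you name but do not prove, is that $\bigcap_{i=1}^{n}\Lambda(G_{e_i})$ does not exceed $\Lambda(H)$; your proposal to ``realize two common limit points as endpoints of a loxodromic axis inside $H$'' has no mechanism behind it, precisely because a point of $\bigcap_{i=1}^{n}\Lambda(G_{e_i})\setminus\Lambda(H)$ need not be dynamically detected by elements of $H$ at all. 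As written, the proposal reduces the corollary to an unproven limit-set intersection property rather than establishing it.
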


	{\bf Acknowledgment:} The author sincerely thanks his supervisor, Pranab Sardar, for many fruitful discussions and comments on the exposition of the paper. The author owes him for pointing out relative acylindrical condition in Theorem \ref{combination theorem}.
	\section{Preliminaries}\label{1}
	We collect here some necessary definitions from \cite{bowditch-relhyp},\cite{bowditch-cgnce},\cite{dahmani-comb}.
	
	A group $G$ is said to be a {\em convergence group} if there exists a compact metrizable space $X$ and $G$ acts on $X$ such that: given any sequence $(g_n)_{n \in \mathbb{N}}$ in $G$, there exists a subsequence $(g_{n_k})_{k \in \mathbb{N}}$ and two points $x$, $y$ in $X$ such that  for all compact subset $K$ of $X\setminus \{y\}$, $g_{n_k}K$ converges uniformly to $x$.
	
	Let $G$ be a group acting as a convergence group on a compact metrizable space $X$. An infinite order element $g \in G$ is said to be \emph{loxodromic} if it has exactly two distinct fixed points in $X$. A point $x\in X$ is said to be a {\em conical limit point} if there exists a sequence $(g_n)_{n \in \mathbb{N}}$ in $G$ and two distinct points $z$, $y$ such that $g_nx$ converges to $z$ and for all $x' \in X\setminus \{x\}$, $g_nx'$ converges to $y$. A subgroup $P$ of $G$ is said to be \emph{parabolic} if it is infinite, does not have any loxodromic element, and fixes a point $p$. Such a point is unique and called a \emph{parabolic point}. The stabilizer of a unique parabolic point is called a {\em maximal parabolic subgroup}. A parabolic point $p$ is called a bounded parabolic point if $Stab_{G}(p)$ acts co-compactly on $X\setminus\{p\}$. Group $G$ is said to be {\em geometrically finite} if every point of $X$ is either a conical limit point or a bounded parabolic point.
	\begin{rem}
		Let $H$ be a subgroup of $G$, and $G$ acts as a convergence group on a compact space $X$. Then, every conical limit point for $H$ action on $\Lambda (H) \subset X$ (see Definition \ref{limit-set, rel qc}(1) below) is a conical limit point for $H$ acting in $X$ and hence for $G$ in $X$. Each parabolic point for $H$ in $\Lambda (H)$ is a parabolic point for $G$ in $X$ and its maximal parabolic subgroup in $H$ is precisely the intersection of maximal parabolic subgroup in $G$ with $H$.
	\end{rem}
	Next, we define relatively hyperbolic groups introduced by Gromov \cite{gromov-hypgps}. Several other definitions of relative hyperbolicity are known, see \cite{hru-rel},\cite{osin-book},\cite{bowditch-relhyp}. In \cite{hru-rel}, Hruska shows that all these definitions are equivalent when group is countable, and the collection of peripheral subgroups is finite. Here, we use the dynamical definition of Bowditch \cite[Definition 1]{bowditch-relhyp}.
	\begin{defn}
		[Relatively hyperbolic group] A group $G$ is said to relatively hyperbolic with respect to a collection of  subgroups $\PP$ if $G$ admits a properly discontinuous action by isometries on a proper hyperbolic space $X$ such that the induced action on $\partial X$ (Gromov boundary of $X$) is convergence, every point of $\partial X$ is either conical limit point or bounded parabolic point, and the subgroups in $\mathbb{H}$ are precisely the maximal parabolic subgroups. 
	\end{defn}
	In short, we say that the pair $(G,\PP)$ is a relatively hyperbolic group. In this case, the boundary of $X$ is canonical and called the Bowditch boundary of $G$. We shall denote it by $\partial G$. Now, we have the following topological characterization of relatively hyperbolic groups given by Yaman \cite{yaman-relhyp}.
	\begin{theorem}
		[\cite{yaman-relhyp}]\label{yaman} Let $G$ be  a geometrically finite group acting on a non-empty perfect metrizable compactum $X$. Assume that the quotient of bounded parabolic points is finite under the action of $G$, and the corresponding maximal parabolic subgroups are finitely generated. Let $\PP$ be the family of maximal parabolic subgroups. Then $(G,\PP)$ is a relatively hyperbolic group and $X$ is equivariantly homeomorphic to Bowditch boundary of $G$. 
	\end{theorem}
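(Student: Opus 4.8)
The plan is to realize the compactum $X$ as the Gromov boundary of a proper geodesic hyperbolic space $\Sigma$ on which $G$ acts properly discontinuously by isometries, in such a way that the induced action on $\partial\Sigma$ is the given action on $X$ and $\PP$ is exactly the collection of maximal parabolic subgroups. Once such a $\Sigma$ is in hand the dynamical definition of relative hyperbolicity is satisfied by construction, so $(G,\PP)$ is relatively hyperbolic; and since the $G$-action on $\partial\Sigma$ agrees with the one on $X$, the canonical identification $\partial\Sigma\cong X$ furnishes the asserted equivariant homeomorphism with the Bowditch boundary. Thus everything reduces to manufacturing $\Sigma$ from the purely dynamical hypotheses.

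I would first use geometric finiteness to organize the parabolic data. There are only countably many bounded parabolic points, lying in finitely many $G$-orbits, each with a finitely generated maximal parabolic stabilizer. Choosing orbit representatives and shrinking, I would fix a $G$-invariant family of pairwise disjoint ``horoball'' neighbourhoods of these points; the cocompact action of each parabolic stabilizer on the complement of its point is precisely what lets this family be made disjoint and locally finite. Next I would build a connected, locally finite graph $K$ carrying a proper cocompact $G$-action off the horoballs. The convergence hypothesis yields a properly discontinuous $G$-action on the space $\Theta$ of distinct triples of $X$; taking a $G$-invariant, quasi-dense orbit in an appropriate ``thick'' locus gives the vertex set, with edges recording uniformly bounded dynamical interaction, in the spirit of Bowditch's annulus-system construction for the absolute (word-hyperbolic) case. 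At each parabolic vertex I would then glue in a combinatorial horoball of Groves--Manning type on which the corresponding member of $\PP$ acts, producing $\Sigma$. By construction $G$ acts properly discontinuously on $\Sigma$, the stabilizers of the horoball centres are exactly the members of $\PP$, and the action is cocompact away from the horoballs.

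The crux, and the step I expect to be the main obstacle, is to prove that $\Sigma$ is Gromov hyperbolic and that $\partial\Sigma$ is $G$-equivariantly homeomorphic to $X$. The essential difficulty is that one must promote the merely pointwise convergence dynamics on $X$ into uniform metric control --- thin triangles --- on $\Sigma$. I would approach hyperbolicity through a boundary/geodesic criterion: verify that the family of bi-infinite paths in $\Sigma$ joining pairs of points of $X$ satisfies a uniform ``guessing geodesics'' or linear isoperimetric condition, using the north--south dynamics of loxodromics over the conical part and the explicit hyperbolic geometry of the combinatorial horoballs over the parabolic part. The identification $\partial\Sigma\cong X$ would then follow by matching conical limit points of the action on $X$ with conical boundary points of $\Sigma$ and bounded parabolic points with horoball centres, equivariance being automatic from the $G$-equivariance of the whole construction. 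The two delicate calibrations to watch are (i) making the horoballs deep enough that the thick part is genuinely cocompact while they still glue in hyperbolically, and (ii) closing the loop between the dynamical and the metric descriptions of the boundary so that the two notions of conical and parabolic point coincide.
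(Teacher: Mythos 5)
The paper offers no proof of this statement: it is Yaman's theorem, imported verbatim from \cite{yaman-relhyp} and used as a black box, so there is no internal argument to compare yours against. Judged on its own terms, your proposal correctly identifies the architecture of the known proof --- reduce the dynamical hypotheses to the metric (Gromov/Bowditch) definition by manufacturing a proper hyperbolic space $\Sigma$ from the action on the space of distinct triples, with horoballs glued at the bounded parabolic points, and then match conical limit points with conical boundary points and parabolic points with horoball centres. This is essentially the strategy of Bowditch's topological characterisation of hyperbolic groups, adapted by Yaman to the relative setting.

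However, as a proof it has a genuine gap, and you name it yourself: the verification that $\Sigma$ is Gromov hyperbolic and that $\partial\Sigma$ is $G$-equivariantly homeomorphic to $X$. This is not a delicate calibration to be checked at the end; it is the entire content of the theorem. Everything before it (disjoint horoball neighbourhoods, a locally finite graph with cocompact action off the horoballs, proper discontinuity on triples) follows rather directly from the definitions, whereas promoting pointwise convergence dynamics to a uniform linear isoperimetric inequality or a guessing-geodesics criterion occupies the bulk of Yaman's (and Bowditch's) papers and requires the annulus-system/crossratio machinery in detail --- in particular a uniform quantitative statement about how conical limit points and loxodromic axes interact, which cannot be extracted from north--south dynamics of individual elements alone. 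A secondary, minor issue: the combinatorial horoballs you invoke are a later (Groves--Manning) device; Yaman's construction attaches horoball-like pieces directly within the annulus-system framework. So the proposal is a sound roadmap but not a proof; the theorem should be cited, as the paper does, rather than re-derived at this level of detail.
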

	
	\begin{defn} We collect some more definitions:\label{limit-set, rel qc}
		\begin{enumerate}
			\item \textup{(Limit set \cite{dahmani-comb})} Let $G$ be a convergence group on $X$. The {\em limit set} $\Lambda(H)$ of  an infinite subgroup $H$ is the unique minimal non-empty closed $H$-invariant subset of $X$. The limit set of a finite set is empty.
			\item \textup{(Relatively quasi-convex subgroup \cite{dahmani-comb})} Let $G$ be a relatively hyperbolic group with Bowditch boundary $\partial G$. Let $H$ be a group acting as a geometrically finite convergence group on a compact metrizable space $\partial H$. We assume that $H$ is embedded in $G$ as a subgroup. $H$ is \emph{relatively quasi-convex} if $\Lambda(H) \subset \partial G$ is equivariantly homeomorphic to $\partial H$.
		\end{enumerate}
	\end{defn}
	Let $(G,\PP)$ be a relatively hyperbolic group and let $H$ be a relatively quasi-convex subgroup of $G$. Then, by \cite[Theorem 9.1]{hru-rel}, we have induced peripheral structure $\PP_H$ on $H$.
	\begin{center}
		$\PP_H=\{H\cap gPg^{-1}:g\in G, P\in \PP$, \text{and}  $H\cap gPg^{-1}$ \text{is infinite}\}
	\end{center} 
	Let $G(\YY)$ be a graph of groups as in Theorem \ref{combination theorem}. Since each edge group is relatively quasi-convex in adjacent vertex groups, we have induced parabolic structures on the edge group. We assume that, as a set, these induced parabolic structures on the edge groups are the same. Then, every parabolic subgroup in an edge group sits inside corresponding maximal parabolic subgroups in adjacent vertex groups. In this way, we obtain a subgraph of groups of $G(\YY)$ where each vertex group is maximal parabolic in the corresponding vertex group of $G(\YY)$, and each edge group is maximal parabolic in the corresponding edge group of $G(\YY)$. Note that the fundamental groups of these subgraphs of groups are subgroups of the fundamental group of $G(\YY)$. We say that these subgroups correspond to the natural cone locus. The notion of cone locus comes from Mj-Reeves \cite{mahan-reeves}, where one has a tree of relatively hyperbolic spaces with qi embedding condition such that the qi embeddings from edge spaces to vertex spaces are strictly type preserving. Then, we have cone locus corresponding to parabolics of vertex and edge spaces. 
	
	Let $G(\YY)$ be a graph of groups and let $T$ be its Bass-Serre tree. The action of $G=\pi_1(G(\YY))$ is said to be {\em acylindrical} if the stabilizer of a geodesic segment in $T$ of length greater than $k$ is finite for some $k\in\mathbb{N}$. When $G(\YY)$ is a graph of relatively hyperbolic groups with relatively quasi-convex edge groups (more generally for a graph of convergence groups with dynamically quasi-convex edge groups \cite{tomar}), we generalize acylindrical action to relative acylindrical action.
	\begin{defn}
		[Relatively acylindrical action] \label{rel acyl}Let $G(\YY)$ be a graph of relatively hyperbolic groups with relatively quasi-convex edge groups. The action of $G=\pi_1(G(\YY))$ on Bass-Serre tree $T$ of $G(\YY)$ is said to be $k$-relatively acylindrical if the stabilizer of any geodesic $\alpha$ in $T$ of length at least $k$ is a parabolic subgroup in the stabilizer of each edge in $\alpha$. If there is no parabolic in edge groups, then we say that the action of $G$ on $T$ is $k$-relatively acylindrical if it is $k$-acylindircal. The action of $G$ on $T$ is relatively acylindrical if it is $k$-relatively acylindrical for some $k\in \mathbb{N}$.
	\end{defn}
	\begin{example}
		Let $G=G_1\ast_P G_2$, where $G_1$ and $G_2$ are relatively hyperbolic and $P$ is a parabolic subgroup sitting properly in corresponding maximal parabolic subgroups of $G_1$ and $G_2$. Then the action of $G$ on its Bass-Serre tree is $2$-relatively acylindrical.
	\end{example}
	\section{Construction of Bowditch boundary}\label{2}
	Let $G(\YY)$ be the graph of groups as in Theorem \ref{combination theorem}, and let $G$ be the fundamental group of $G(\YY)$. In this section, we construct a compact metrizable space $X$ on which there is a natural action of $G$. In the next section, we show that this action is geometrically finite. Then, by invoking Theorem \ref{yaman}, $G$ is a relatively hyperbolic group.
	
	We fix some notation: Let $T$ be the Bass-Serre tree of $G(\YY)$. For a vertex $v$ of  $T$, we write the vertex stabilizer as $G_v$. Similarly, for an edge $e$ of $T$, we write the edge stabilizer as $G_e$. We write $X_v$ and $X_e$ as compact metrizable space on which $G_v$ and $G_e$ act as convergence groups.
	\subsection{Definition of $X$ as a set}\hfill
	
	\textbf{Contribution of the vertices of $T$}
	
	Let $\tau$ be a subtree of $T$ such that $G.\tau=T$. Let $\VV(\tau)$ be the set of vertices of $\tau$. For each vertex $v\in\VV(\tau)$, the group $G_v$ acts geometrically finitely on compact metrizable space $X_v$ (Bowditch boundary of $G_v$). Set $\Omega$ to be $G \times(\displaystyle\bigsqcup_{v\in \VV(\tau)}X_v)$ divided by the natural relation:
	
	\[(g_1,x_1) = (g_2,x_2) \text{ if }  \exists v\in \VV(\tau), x_i\in X_v,g_2^{-1}g_1 \in G_v, (g_2^{-1}g_1)x_1 =x_2 \text{ for } i=1,2\]
	
	In this way, $\Omega$ is the disjoint union of compactums corresponding to the  stabilizers of the vertices of $T$. Also, for each $v\in \VV(\tau)$, the space $X_v$ naturally embeds in $\Omega$ as the image of $\{1\}\times X_v$. We identify it with its image. The group $G$ naturally acts on the left on $\Omega$. For $g\in G$, $gX_v$ is the compactum for the vertex stabilizer $G_{gv}$.
	
	\textbf{Contribution of the edges of $T$} 
	
	Each edge allows us to glue together compactums corresponding to the vertex stabilizers along with the limit set of the stabilizer of the edge. Each edge group embeds as a relatively quasi-convex subgroup in adjacent vertex groups. Thus, for each edge $e\in T$, group $G_e$ is relatively hyperbolic with Bowditch boundary $X_e$. Let $e$ = $(v_1,v_2)$ be the edge in $\tau$. Then, by our assumption, there exist equivariant embeddings $\Lambda_{e,v_i}:X_e \hookrightarrow X_{v_i}$ for $i=1,2$. Similar maps are defined by translation for edges in $T\setminus \tau$.
	
	The equivalence relation $\sim$ on $\Omega$ is the transitive closure of the following: Let $v$ and $v'$ be vertices of $T$. The points $x \in X_v$ and $x' \in X_{v'}$ are equivalent in $\Omega$ if there is an edge $e$ between $v$ and $v'$ and a point $x_e \in X_e$ satisfying $x = \Lambda_{e,v}(x_e)$ and $x' = \Lambda_{e,v'}(x_e)$ simultaneously. Let $\Omega/_{\sim}$ be the quotient under this relation and let $\pi':\Omega \map \Omega/_{\sim}$  be the corresponding projection. An equivalence class $[x]$ of an element $x\in \Omega$ is denoted by $x$ itself.
	
	Let $\partial T$ be the (visual) boundary of the tree. We define $X'$ as a set: $X' = \partial T \sqcup (\Omega/_{\sim})$. 
	\begin{defn}\label{domain}
		$\bf{(Domains)}$ For all $x \in \Omega/_{\sim}$, we define the {\em domain} of $x$ to be $D(x) = \{v\in \VV(T) | x\in \pi'(X_v)\}$. We also say that the domain of a point $\xi\in \partial T$ is $\{\xi\}$ itself.
	\end{defn}
	\subsection{Final constrution of $X$}\hfill
	
	It turns out that $X'$ with the topology defined in \cite{dahmani-comb} is not a Hausdorff space. Therefore we need to modify $X'$ further. For getting the desired space, we  further define an equivalence relation on the set $X'$. Note that points coming from edge boundaries can have infinite domains. Then, by our hypotheses, the points in $X'$ whose domains are infinite are parabolics. Also, if $x_1,x_2 \in X_e$ are parabolics for some edge $e\in T$ and $D(x_1)$ and $D(x_2)$ are infinite, then $D(x_1)\cap D(x_2)$ is uniformly finite. So, we identify the boundary points of the infinite domain $D(x)$ in $\partial T$ to $x$ itself. By considering the equivalence relation generated by these relations, we denote the quotient of $X'$ by $X$. Again, $X$ can be written as a disjoint union of two sets of equivalence classes :
	\begin{center}
		$X = \Omega' \sqcup (\partial T)'$
	\end{center} 
	where $\Omega'$ (as a set, it is the same as $\Omega$) is the set of equivalence classes of elements in $\Omega$, and, $(\partial T)'$ are the equivalence classes of the remaining elements in $\partial T$ as some elements of $\partial T$ are identified with parabolic points of edge groups. The equivalence class of each remaining element in $\partial T$ is a singleton. 
	\begin{rem}
		The elements of $\Omega'$ will be denoted by $x,y,z...$ and the elements of $(\partial T)'$ will be denoted by $\xi,\eta,\zeta....$. In $X$, we define the domain of each element as previously defined. Additionally, for those points $\eta$ of $\partial T$, which are identified with parabolic  points $x$ of edge groups, we define the domain of $\eta$ same as the domain of $x$. 
	\end{rem}
	It is clear that for each $v$ in $T$, the restriction of projection map $\pi'$ from  $X_v$ to $\Omega$ is injective. Let $\pi''$ be the projection map from $X'$ to $X$. Let $\pi$ be the composition of the restriction of $\pi''$ to $\Omega$ and $\pi'$. Again the restriction of $\pi$ to $X_v$ is injective for all $v$ in $T$.
	\subsection{Definition of neighborhoods in $X$}\hfill
	
	We define a family $(W_n(x))_{n\in \mathbb{N},x\in X}$ of subsets of $X$, which generates a topology on $X$. For a vertex $v$ and an open subset $U$ of $X_v$, we define the subtree $T_{v,U}$ of $T$ as $\{w\in \VV(T) : X_e\cap U\neq \emptyset\}$, where e is the first edge of $[v,w]$. For each vertex $v$ in $T$, let us choose $\UU(v)$, a countable basis of open neighborhoods of $X_v$. Without loss of generality, we can assume that for all $v$, the collection of open subsets $\UU(v)$ contains $X_v$. Let $x\in\Omega'$ and $D(x) = \{v_1,...,v_n,...\} = (v_i)_{i\in I}$. Here $I$ is a subset of $\mathbb{N}$. For each $i\in I$, let $U_i \subset X_{v_i}$ be an element of $\UU(v_i)$ containing $x$ such that for all but finitely many indices $i\in I, U_i=X_{v_i}$. The set $W_{(U_i)_i\in I}(x)$ is the disjoint union of three subsets 
	\begin{center}
		$W_{(U_i)_{i\in I}}(x) = A \cup B\cup C$,
	\end{center} 
	
	where $A$ is nothing but the collection of all boundary points of subtrees $T_{v_i,U_i}$ which are not identified with some parabolic point corresponding to an edge group, $B$ is the collection of all points $y$ outside $\bigcup_{i\in I}X_{v_i}$ in $\Omega'$ whose domains lie inside $\bigcap_{i\in I}T_{v_i,U_i}$, and $C$ contains all elements in $\bigcup_{j\in I}X_{v_j}$ which belong to $\bigcap_{m\in I | y \in X_{v_m}}U_m$. In notations $A$,$B$,$C$ are defined as follows:
	\begin{align*}
		A &= (\bigcap_{i\in I} \partial T_{v_i,U_i}) \cap (\partial T)'\\
		B &= \{y \in \Omega'\setminus(\bigcup_{i\in I}X_{v_i}) | D(y) \subset \bigcap_{i\in I}T_{v_i,U_i}\}\\
		C &=\{y \in \bigcup_{j\in I}X_{v_j} | y \in \bigcap_{m\in I | y \in X_{v_m}}U_m\}
	\end{align*}
	
	As $A \subset (\partial T)'$, the remaining elements in $\displaystyle\bigcap_{i\in I} \partial T_{v_i,U_i}$ are in $B$. In this way, $A$, $B$, $C$ are disjoint subsets of $X$. 
	\begin{rem}
		The set $W_{(U_i)_{i\in I}}(x)$ is completely defined by the data of the domain of $x$, the data of a finite subset $J$ of $I$, and the data of an element of $\UU(v_j)$ for each index $j\in J$. Therefore there are only countably many different sets  $W_{(U_i)_{i\in I}}(x)$, for $x\in \Omega'$, and $U_i\in \UU(v_i),v_i\in D(x)$. For each $x$, we choose an arbitrary order and denote them by $W_m(x)$.
	\end{rem}
	Now, we  define neighborhoods around the points of $(\partial T)'$. Let $\eta\in (\partial T)'$ and choose a base point $v_0$ in $T$. Firstly, we define the subtree $T_m(\eta)$: it consists of the vertices $w$ such that $[v_0,w] \cap [v_0,\eta)$ has length bigger than $m$. We set $W_m(\eta) = \{\zeta \in X | D(\zeta) \subset T_m(\eta)\}$. This definition does not depend on the choice of base point $v_0$, up to shifting the indices.
	\subsection{Topology of $X$}\hfill 
	
	Consider the smallest topology on $X$ such that the family of sets $\{W_n(x):x\in X, n\in\mathbb{N}\}$ are open subsets of $X$. The following lemmata show that $X$ is a compact metrizable space.
	\begin{lemma}[Avoiding an edge]\label{avoiding an edge} Let $x\in X$ and let $e$ be an edge of $T$ such that at least one vertex of $e$ is not in $D(x)$. Then, there exists $W_n(x)$ such that $X_e\cap W_n(x)=\emptyset$.
	\end{lemma}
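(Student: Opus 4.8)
The plan is to split the argument according to whether $x$ lies in $(\partial T)'$ or in $\Omega'$. Throughout I write $e=(v_1,v_2)$ and assume, without loss of generality, that $v_2\notin D(x)$. Two facts will be used repeatedly: every $x_e\in X_e$ satisfies $\{v_1,v_2\}\subset D(x_e)$, because $X_e$ embeds via $\Lambda_{e,v_1}$ and $\Lambda_{e,v_2}$ into $X_{v_1}$ and $X_{v_2}$; and $X_e\subset\Omega'$, so $X_e$ meets neither $(\partial T)'$ nor the component $A$ of any basic neighborhood. Hence it suffices to keep $X_e$ out of the components $B$ and $C$ (respectively out of $W_m(\eta)$ in the boundary case).

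First I would treat $x=\eta\in(\partial T)'$. Here $D(\eta)$ contains no vertex of $T$, while $v_2$ is a fixed vertex, so $[v_0,v_2]\cap[v_0,\eta)$ has length at most $d(v_0,v_2)$. Thus for any $m>d(v_0,v_2)$ one has $v_2\notin T_m(\eta)$, and since $v_2\in D(x_e)$ for every $x_e\in X_e$, no such $x_e$ can satisfy $D(x_e)\subset T_m(\eta)$; that is, $X_e\cap W_m(\eta)=\emptyset$.

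The substantive case is $x\in\Omega'$. I would first invoke the structural fact that $D(x)$ is a nonempty subtree of $T$ --- this holds because an equivalence class in $\Omega/{\sim}$ arises from a chain of gluings across successive edges, so the vertices it meets form a connected subgraph. Let $v^*$ be the projection of $e$ onto the subtree $D(x)$ and let $e_0$ be the first edge of the geodesic from $v^*$ towards $e$ (so $v^*=v_1$ and $e_0=e$ when $v_1\in D(x)$). The key point is that $x\notin\Lambda_{e_0,v^*}(X_{e_0})$: otherwise $x=\Lambda_{e_0,v^*}(x_{e_0})$ for some $x_{e_0}\in X_{e_0}$, whence $x\sim\Lambda_{e_0,w^*}(x_{e_0})$ for the far endpoint $w^*$ of $e_0$, forcing $w^*\in D(x)$ and contradicting the minimality of $v^*$. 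Since $\Lambda_{e_0,v^*}(X_{e_0})$ is closed in the compact metrizable, hence normal, space $X_{v^*}$, I would choose $U_{v^*}\in\UU(v^*)$ with $x\in U_{v^*}$ and $U_{v^*}\cap\Lambda_{e_0,v^*}(X_{e_0})=\emptyset$, setting $U_v=X_v$ for every other $v\in D(x)$.

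Finally I would verify that $W_{(U_i)_{i\in I}}(x)$ avoids $X_e$. Fix $x_e\in X_e$. Because the first edge of $[v^*,v_2]$ is $e_0$ and $U_{v^*}$ misses $\Lambda_{e_0,v^*}(X_{e_0})$, we have $v_2\notin T_{v^*,U_{v^*}}$, hence $v_2\notin\bigcap_{i\in I}T_{v_i,U_i}$. If $D(x_e)\cap D(x)=\emptyset$ then $x_e\notin\bigcup_{i\in I}X_{v_i}$, and as $v_2\in D(x_e)$ lies outside $\bigcap_{i\in I}T_{v_i,U_i}$ we get $x_e\notin B$. If instead some $w\in D(x_e)\cap D(x)$, then connectivity of $D(x_e)$ together with $v_1\in D(x_e)$ forces $[w,v_1]\subset D(x_e)$ to traverse $v^*$ and $e_0$; in particular $v^*\in D(x_e)$, and by injectivity of $\pi$ on $X_{v^*}$ the unique representative of $x_e$ in $X_{v^*}$ must equal $\Lambda_{e_0,v^*}(x_{e_0})$ for some $x_{e_0}\in X_{e_0}$, so it lies outside $U_{v^*}$ and $x_e\notin C$. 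The step I expect to be the main obstacle is precisely this last one: correctly identifying the $X_{v^*}$-representative of $x_e$ as an edge-boundary point of $e_0$ (via the tree structure and injectivity of $\pi|_{X_{v^*}}$), which is what rules out the $C$-component and completes the proof.
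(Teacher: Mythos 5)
Your proposal is correct and follows essentially the same route as the paper: handle $\eta\in(\partial T)'$ by noting the offending vertex leaves $T_m(\eta)$ for large $m$, and for $x\in\Omega'$ locate the first edge $e_0$ on the path from $D(x)$ toward $e$ whose boundary misses $x$, then separate $x$ from the compact set $\Lambda_{e_0,v^*}(X_{e_0})$ inside $X_{v^*}$. Your write-up is in fact more careful than the paper's (which only asserts that such a $U_{v^*}$ suffices), since you explicitly verify that the resulting basic neighborhood excludes $X_e$ from the $B$ and $C$ components.
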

	\begin{proof}
		Let $e$ be an edge of $T$ such that vertex $v_0$ of $e$ does not belong to $D(x)$. If $\eta\in (\partial T)'$, then by choosing $v_0$ as a base point, we see that $W_n(\eta)\cap X_e=\emptyset$ for $n\geq 1$. If $x\in X_v$ for some $v\in V(T)$, then there is a unique path $\alpha$ in $T$ joining $v$ to $v_0$. Since $x\notin D(x)$, let $e_1$ be the first edge in $\alpha$ such that $x\notin X_{e_1}$. Let $v_1$ be the vertex of $e_1$ such that $x\in X_{v_1}$ but $x\notin X_{e_1}$. To prove the lemma, it is sufficient to find a neighborhood $U_{v_1}$ of $x$ in $X_{v_1}$ such that $U_{e_1}\cap X_{e_1}=\emptyset$. Since $X_{e_1}$ is compact and $x\notin X_{e_1}$, then there exist $U_{e_1}$ such that $U_{e_1}\cap X_{e_1}=\emptyset$.
	\end{proof}
	Hausdorffness of $X$ follows from the following:
	\begin{lemma}
		The space $(X,\TT)$ is Hausdorff.
	\end{lemma}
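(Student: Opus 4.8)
The plan is to produce, for any two distinct points of $X$, a pair of disjoint basic neighborhoods, splitting into cases according to whether the points lie in $\Omega'$ or in $(\partial T)'$. Three facts are used repeatedly: each vertex boundary $X_v$ is compact Hausdorff (it is a Bowditch boundary); each domain $D(z)$ is a \emph{subtree} of $T$, since if $z\in X_v\cap X_{v'}$ then $z$ propagates along the edge limit sets on $[v,v']$ and so lies in $X_w$ for every intermediate $w$; and the ``Avoiding an edge'' Lemma \ref{avoiding an edge}. A preliminary reduction drives the whole argument: for distinct $p,q\in\Omega'$ the intersection $D(p)\cap D(q)$ is \emph{finite}. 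Indeed, if this intersection contained two vertices it would contain an edge $e$, whence $p,q\in X_e$; if in addition both domains were infinite, the uniform finiteness provided by condition (4) would force $D(p)\cap D(q)$ to be finite, while if either domain is finite the conclusion is immediate. This finiteness is exactly what makes it legitimate to separate $p$ from $q$ using a neighborhood that is proper at only finitely many vertex boundaries.

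For the easy cases I would argue as follows. If both points lie in $(\partial T)'$, their rays in $T$ diverge, so for $m$ large the subtrees $T_m(\cdot)$ are disjoint; since every point of $X$ has nonempty domain, none can have its domain inside both, and the corresponding sets $W_m$ are disjoint. If $q\in\Omega'$ and $p=\eta\in(\partial T)'$, the decisive observation is that $\eta$ is \emph{not} an ideal boundary point of the subtree $D(q)$, because all such ideal points were identified with $q$ during the construction and hence deleted from $(\partial T)'$; therefore the ray to $\eta$ eventually exits $D(q)$ through an edge $e$ with far endpoint outside $D(q)$, Lemma \ref{avoiding an edge} yields $W_n(q)$ disjoint from $X_e$, and connectedness of domains then confines $W_n(q)$ to the $q$-side of $e$ while a deep enough $W_m(\eta)$ lives strictly on the other side.

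The case $p,q\in\Omega'$ with $D(p)\cap D(q)=\emptyset$ is handled by the same mechanism: the two subtrees are joined by a bridge, and for an edge $e$ of that bridge each of $p,q$ omits one endpoint, so Lemma \ref{avoiding an edge} applies on both sides and, by connectedness of domains, places $W_n(p)$ and $W_m(q)$ in opposite components of $T\setminus\{e\}$. The genuinely delicate case is $p,q\in\Omega'$ with $S:=D(p)\cap D(q)\neq\emptyset$, where by the preliminary reduction $S$ is a finite subtree. At each vertex $w\in S$ one has $p\neq q$ in the Hausdorff space $X_w$, so I would choose disjoint open sets $U_w^p\ni p$ and $U_w^q\ni q$, use them at the finitely many vertices of $S$ together with $X_v$ at all remaining vertices of the two domains, and form $W(p)$ and $W(q)$. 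That the points themselves are not captured is immediate: $p$ lies in $X_w$ with $w\in D(q)$, so $p$ is excluded from the $B$-part of $W(q)$, and $p\notin U_w^q$ excludes it from the $C$-part, and symmetrically for $q$; the real content is to show that the $A$- and $B$-parts of $W(p)$ and $W(q)$ are disjoint.

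This last point is where I expect the main obstacle. A stray point $y$ in the $B$-parts of both neighborhoods has domain disjoint from $S$ and hence contained in the component of $T\setminus S$ beyond a single out-edge $e_0$ at some $w\in S$, and membership in both $B$-parts forces $X_{e_0}\cap U_w^p\neq\emptyset$ and $X_{e_0}\cap U_w^q\neq\emptyset$; the same bookkeeping governs the ideal points in the $A$-parts. The combinatorial half is favorable, because each out-edge of $S$ lies in at most one of $D(p),D(q)$ (a shared out-edge would pull its far endpoint into $S$), so $q\notin X_{e_0}$ whenever $e_0$ is a $p$-direction and vice versa. The difficulty is the analytic half: a priori infinitely many such $X_{e_0}$ could accumulate at $q$ inside $X_w$ and so meet every neighborhood $U_w^q$. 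The crux of the proof is therefore to establish that the edge limit sets $\{X_{e_0}\}$ attached at $w$ form a null family, so that for a fixed metric on $X_w$ only finitely many have diameter exceeding a given threshold; those finitely many can be avoided by shrinking $U_w^q$ (recall $q\notin X_{e_0}$ for $p$-directions), while the remaining small ones cluster away from $q$ once $U_w^p$ and $U_w^q$ are chosen small enough. Establishing this null-family control uniformly over the finitely many vertices of $S$ is the heart of the matter; granting it, the $A$- and $B$-parts separate and $W(p)\cap W(q)=\emptyset$.
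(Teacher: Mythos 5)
Your case division and the tools you use are the same as the paper's: the paper also splits on whether $D(x)\cap D(y)$ is empty, uses the bridge edge together with Lemma \ref{avoiding an edge} in the disjoint case, and in the overlapping case notes that $D(x)\cap D(y)$ is finite (by condition (4) and the identification of points with infinite common domain) and then separates $x$ from $y$ inside each of the finitely many shared vertex boundaries. Where you go beyond the paper is in refusing to accept that disjointness of the local neighborhoods $U^p_w, U^q_w$ automatically forces disjointness of $W(p)$ and $W(q)$. That concern is legitimate: membership of a far-away point $y$ in the $B$-part (and of an ideal point in the $A$-part) is governed by whether the limit set $X_{e_0}$ of the out-edge $e_0$ at $w$ meets $U^p_w$, respectively $U^q_w$, and since in this paper $X_{e_0}$ is the whole Bowditch boundary of a relatively quasi-convex edge group (not a singleton, as it was in the parabolic-edge-group predecessor \cite{tomar}), a single $X_{e_0}$ can perfectly well meet two disjoint open sets. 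The paper's one-line conclusion ``Using the neighborhoods, we have disjoint neighborhoods'' silently skips exactly this point. Your identification of the $C$-parts as unproblematic (via connectedness of domains, so that any point common to both would have to live over a vertex of $S$) is also correct.

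The one thing your proposal leaves open is the ingredient you call the heart of the matter: that at each $w\in S$ the translates $X_{e_0}$ of edge-group limit sets form a null family, equivalently that only finitely many of them can meet two fixed disjoint compact sets. You should not leave this as a hypothesis, but it does not need a new proof either: it is precisely the statement that relatively quasi-convex subgroups of a relatively hyperbolic group are \emph{dynamically quasi-convex} in the sense of Bowditch--Dahmani (Dahmani proves this for (fully) quasi-convex subgroups in \cite{dahmani-comb}, and the equivalence of relative and dynamical quasi-convexity is known in general). Granting that, your shrinking argument --- handle the finitely many offending $X_{e_0}$ one at a time, using that an out-edge of $S$ cannot have both $p$ and $q$ in its limit set, since otherwise its far endpoint would lie in $S$ --- closes the case completely. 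So: same skeleton as the paper, but your version supplies a genuinely necessary step that the paper's proof omits; cite dynamical quasi-convexity explicitly and the argument is complete.
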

	\begin{proof}
		Let $x$ and $y$ be two distinct points of $X$. Suppose that $D(x)\cap D(y)=\emptyset$. Then there is a unique geodesic $\alpha$ from $D(x)$ to $D(y)$. Since $D(x)\cap D(y)=\emptyset$, there exists an edge $e$ in $\alpha$ such that both vertices of $e$ are neither contained in $D(x)$ nor in $D(y)$. Then, by Lemma \ref{avoiding an edge}, we have disjoint neighborhoods $W_n(x)$ and $W_(y)$. If $D(x)\cap D(y)\neq \emptyset$. Note that $D(x)\cap D(y)$ is finite: otherwise, $x$ will be equal to $y$. Let $D(x)\cap D(y)=\{v_1,v_2,...,v_n\}$ for some $n\in\mathbb{N}$. Since $X_v$ is Hausdorff for $v\in V(T)$, choose disjoint neighborhoods $U_{v_i},V_{v_i}$ around $x,y$ respectively in $X_{v_i}$ for $i=1,2...,n$. Using the neighborhoods, we have disjoint neighborhoods $W_n(x),W_m(y)$ around $x,y$ respectively.
	\end{proof}
	\begin{lemma}[Filtration]
		For every $x$, every integer $n$, every $y\in W_n(x)$, there exists $m$ such that $W_m(y)\subseteq W_n(x)$.
	\end{lemma}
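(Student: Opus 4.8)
The plan is to verify this neighborhood-basis axiom by a case analysis on whether $x$ and $y$ lie in $\Omega'$ or in $(\partial T)'$, producing in each case the required $W_m(y)$ by shrinking the open sets defining it and, when $y\in(\partial T)'$, by taking the index $m$ large. The single technical device underlying every case is a \emph{shadow-nesting} claim: if $w$ is a vertex lying in a shadow-subtree $T_{v,U}$ and the edge $e_0$ pointing from $w$ back toward $v$ satisfies $y\notin X_{e_0}$, then for a small enough basic neighborhood $V\in\UU(w)$ of $y$ one has $T_{w,V}\subseteq T_{v,U}$. Indeed, using Lemma \ref{avoiding an edge} applied to $e_0$ we may choose $V$ with $V\cap X_{e_0}=\emptyset$, so that every direction surviving in $T_{w,V}$ points away from $v$; its first edge as seen from $v$ then coincides with the first edge of $[v,w]$, which meets $U$ by hypothesis.

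When $x=\eta\in(\partial T)'$, the neighborhood $W_n(\eta)=\{\zeta : D(\zeta)\subseteq T_n(\eta)\}$ depends only on the half-tree $T_n(\eta)$, so it suffices to find $m$ with $D(z)\subseteq T_n(\eta)$ for every $z\in W_m(y)$. If $y=\zeta\in(\partial T)'$ as well, then $y\in W_n(\eta)$ forces the rays $[v_0,\eta)$ and $[v_0,\zeta)$ to share more than $n$ edges, and taking $m$ at least their divergence length (and $m=n$ when $\zeta=\eta$) gives $T_m(\zeta)\subseteq T_n(\eta)$, whence $W_m(\zeta)\subseteq W_n(\eta)$. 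If instead $y\in\Omega'$, then $D(y)\subseteq T_n(\eta)$, and since $T_n(\eta)$ is the half-tree beyond a single edge $[p_n,p_{n+1}]$ of $[v_0,\eta)$, I would shrink the finitely many nontrivial open sets defining $W_m(y)$, via the shadow-nesting claim applied to the directions of $D(y)$ that exit through that edge, so that $A\cup B\cup C$ for $y$ consists only of points with domain inside $T_n(\eta)$.

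When $x\in\Omega'$ with $W_n(x)=A\cup B\cup C$ determined by $D(x)=(v_i)_{i\in I}$ and open sets $U_i$ (with $U_i=X_{v_i}$ for all but finitely many $i$), the point $y$ lies in one of $A$, $B$, $C$. If $y\in A\subseteq(\partial T)'$, the ray $[v_0,y)$ is eventually inside $S:=\bigcap_i T_{v_i,U_i}$, so choosing $m$ with $T_m(y)\subseteq S$ gives $W_m(y)\subseteq A\cup B\subseteq W_n(x)$. If $y\in B$, so $y\notin\bigcup_i X_{v_i}$ and $D(y)\subseteq S$, then for each of the finitely many $i$ with $U_i\neq X_{v_i}$ I let $w_i^{*}$ be the vertex of $D(y)$ closest to $v_i$ and $e_0^{i}$ the edge from $w_i^{*}$ toward $v_i$; since $v_i\notin D(y)$ the edge $e_0^{i}$ exits $D(y)$, hence $y\notin X_{e_0^{i}}$, and shrinking the open set at $w_i^{*}$ to avoid $X_{e_0^{i}}$ yields $\bigcap_j T_{w_j,V_j}\subseteq S$. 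The case $y\in C$ is analogous, except that on the shared vertices $v_j\in D(x)\cap D(y)$ one refines $V_j\in\UU(v_j)$ to lie inside $U_j$ (possible since $y\in U_j$ and $\UU(v_j)$ is a basis), while on the remaining vertices of $D(y)$ one again kills the exit edges.

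The hard part, demanding the most care, is the bookkeeping in the cases $y\in B$ and $y\in C$: one must check that each of the three pieces $A$, $B$, $C$ of the inner neighborhood $W_m(y)$ lands in the correct piece of $W_n(x)$, and in particular that a point $z\in B$ for $y$ meeting some defining vertex $X_{v_i}$ cannot lie in $X_{v_i}\setminus U_i$ — this is exactly why the exit edge $e_0^{i}$ must be killed, since reaching $v_i$ from $w_i^{*}$ would require crossing $e_0^{i}$, so after the shrinking $v_i\notin\bigcap_j T_{w_j,V_j}$. Two global facts keep the construction finite and well-defined: only finitely many $U_i$ are proper, so only finitely many exit edges must be avoided and only finitely many $V_j$ are refined; and at points of infinite domain (the parabolic points where $\partial T$ is glued in) one must respect the identifications built into $X$, tracking through $A$ those boundary points already collapsed to parabolic edge points rather than double-counting them. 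Modulo this bookkeeping, the shadow-nesting claim together with Lemma \ref{avoiding an edge} closes every case.
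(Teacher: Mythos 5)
Your proof is correct and follows essentially the same route as the paper's: a case analysis on whether $x$ and $y$ lie in $\Omega'$ or $(\partial T)'$, using Lemma \ref{avoiding an edge} to kill the exit edges toward the defining vertices and shrinking the open sets $V_i\subseteq U_i$ on the shared domain vertices. Your shadow-nesting claim and the $A$, $B$, $C$ bookkeeping simply make explicit several steps that the paper's much terser proof leaves implicit.
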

	\begin{proof}
		Let $\eta\in (\partial T)'$ and let $y\in W_n(\eta)$. If $\xi\in (\partial T)'$, then choose $n=m$ and $W_n(x)=W_m(y)$. Let $y\in \Omega'$. Then $D(y)\subseteq T_n(\eta)$. Suppose the subtree $T_n(\eta)$ starts at vertex $v$ and let $e$ be the last edge on a geodesic segment from a base vertex to $v$. Observe that points of $X_e$ do not belong to $W_n(\eta)$. Thus, there exists $W_m(y)$ such that $W_m(y)\subseteq W_n(\eta)$ for sufficiently large $m$. Now, suppose that $x\in \Omega'$ and let $y\in W_n(x)$. If $D(x)\cap D(y)=\emptyset$, then there exists an edge on a unique geodesic segment joining $D(x)$ to $D(y)$. Then, by Lemma \ref{avoiding an edge}, one can find a neighborhood $W_m(y)$ such that $W_m(y)\subseteq W_n(x)$. If $D(x)\cap D(y)\neq\emptyset$, then this intersection is finite. Let $D(x)\cap D(y)=(v_i)$, where $i=1,2,...,k$ for some integer $k$. Note that $y\in U_i$, where $U_i$ is a neighborhood around $x$ in $X_{v_i}$. Now, by choosing a neighborhood $V_i$ of $y$ in $X_{v_i}$ such that $V_i\subseteq U_i$, we see that there exists a neighborhood $W_m(y)$ such that $W_m(y)\subseteq W_n(x)$.  
	\end{proof}
	\begin{lemma}
		The family of sets $\{W_n(x): n\in \mathbb{N},x\in X\}$ form a basis for the topology $\TT$.
	\end{lemma}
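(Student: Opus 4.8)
The plan is to verify the two axioms characterising a basis, after which we are done: since $\TT$ is by definition the smallest topology containing all the sets $W_n(x)$, once the family $\{W_n(x)\}$ is shown to satisfy the basis axioms, the topology generated by taking unions of its members is automatically equal to $\TT$, and there is nothing further to check. The covering axiom is immediate, because $x\in W_n(x)$ for every $x\in X$ and every $n$. Indeed, if $x\in\Omega'$ then each basic set $W_{(U_i)_{i\in I}}(x)$ is built from sets $U_i\in\UU(v_i)$ that were chosen to contain $x$, so $x$ lies in the piece $C$ and hence in $W_{(U_i)_{i\in I}}(x)$; if $\eta\in(\partial T)'$ then $D(\eta)=\{\eta\}$ and $\eta$ is an end of the subtree $T_m(\eta)$, so $D(\eta)\subset T_m(\eta)$ and $\eta\in W_m(\eta)$.

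For the second axiom I must show: given $W_n(x)$, $W_m(y)$ and a point $z\in W_n(x)\cap W_m(y)$, there exists $W_k(z)\subseteq W_n(x)\cap W_m(y)$. I would deduce this from the Filtration lemma together with one further observation, namely that for each fixed point $z$ the family $\{W_k(z)\}_k$ is \emph{downward directed}: for any indices $a,b$ there is $c$ with $W_c(z)\subseteq W_a(z)\cap W_b(z)$. Granting directedness, the argument is purely formal: applying Filtration to $z\in W_n(x)$ gives $W_a(z)\subseteq W_n(x)$, applying it to $z\in W_m(y)$ gives $W_b(z)\subseteq W_m(y)$, and directedness then supplies $W_k(z)\subseteq W_a(z)\cap W_b(z)\subseteq W_n(x)\cap W_m(y)$, with $z\in W_k(z)$, exactly as required.

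It remains to prove directedness, and the key is monotonicity of the basic sets in their defining data: if $U_i\subseteq U_i'$ in $X_{v_i}$ for every $i\in D(z)$, then $W_{(U_i)}(z)\subseteq W_{(U_i')}(z)$. This is checked piece by piece. Shrinking the $U_i$ shrinks each subtree $T_{v_i,U_i}$, since $\Lambda_{e,v_i}(X_e)\cap U_i\subseteq\Lambda_{e,v_i}(X_e)\cap U_i'$; hence it shrinks $\bigcap_i\partial T_{v_i,U_i}$ and so the piece $A$, tightens the condition $D(y)\subset\bigcap_iT_{v_i,U_i}$ defining $B$, and tightens the condition defining $C$; thus all three pieces only get smaller. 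Now given two basic neighbourhoods $W_{(U^1)}(z)$ and $W_{(U^2)}(z)$, determined by finite index sets $J_1,J_2$ (the indices $i$ with $U_i\neq X_{v_i}$), I would set $J=J_1\cup J_2$ and, for each $j\in J$, choose $U_j\in\UU(v_j)$ with $z\in U_j\subseteq U_j^1\cap U_j^2$ (possible because $U_j^1\cap U_j^2$ is open in $X_{v_j}$, contains $z$, and $\UU(v_j)$ is a basis for the topology of $X_{v_j}$; here $U_j^i:=X_{v_j}$ when $j\notin J_i$), and $U_i=X_{v_i}$ for $i\notin J$. Then $U_i\subseteq U_i^1$ and $U_i\subseteq U_i^2$ for all $i$, so by monotonicity $W_{(U)}(z)\subseteq W_{(U^1)}(z)\cap W_{(U^2)}(z)$, giving directedness for points of $\Omega'$. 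For $z=\zeta\in(\partial T)'$ the sets $W_k(\zeta)$ are nested decreasingly in $k$, since $T_k(\zeta)$ shrinks as $k$ grows, so one simply takes $k=\max(a,b)$.

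The main obstacle is the bookkeeping in the monotonicity step: one must carefully track how the three pieces $A$, $B$, $C$ and the auxiliary subtrees $T_{v,U}$ respond both to shrinking the $U_i$ and to enlarging the finite index set $J$, and verify that the refined neighbourhood still contains $z$. Everything else—the reduction through Filtration and the covering axiom—is routine, and the identification of the generated topology with $\TT$ is automatic from the definition of $\TT$ as the smallest topology containing the sets $W_n(x)$.
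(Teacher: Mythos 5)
Your proposal is correct and follows essentially the same route as the paper: reduce via the Filtration lemma to finding $W_a(z)\subseteq W_n(x)$ and $W_b(z)\subseteq W_m(y)$, and then use downward directedness of the family $\{W_k(z)\}_k$ to finish. The only difference is that the paper merely asserts the directedness step ("note that there exists $m$ such that $W_m(y)\subseteq W_{m_1}(y)\cap W_{m_2}(y)$"), whereas you supply the monotonicity-and-refinement argument that justifies it.
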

	\begin{proof}
		Using the previous lemma, it remains to prove that if $W_{n_1}(x_1)$ and $W_{n_2}(x_2)$ are two neighborhoods and $y\in W_{n_1}(x_1)\cap W_{n_1}(x_2)$ then there exists a neighborhood $W_n(y)$ around $y$ such that $W_n(y)\subseteq W_{n_1}(x_1)\cap W_{n_1}(x_2)$. Again, by the previous lemma, there exists $W_{m_1}(y)$ and $W_{m_1}(y)$ such that $W_{m_1}(y)\subseteq W_{n_1}(x_1)$ and $W_{m_1}(y)\subseteq W_{n_1}(x_2)$. Note that there exists $m$ such that $W_m(y)\subseteq W_{m_1}(y)\cap W_{m_1}(y)$ and hence the lemma.
	\end{proof}
	\begin{lemma}
		For each vertex $v$, $\pi|_{X_v}:X_v\rightarrow X$ is a continuous map.  \qed
	\end{lemma}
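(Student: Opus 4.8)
The plan is to verify continuity pointwise, using the explicit basis $\{W_n(\cdot)\}$. Fix a point $x\in X_v$ and write $y=\pi(x)\in\Omega'$; since $x\in X_v$ we have $v\in D(y)$. A basic neighbourhood of $y$ has the form $W_{(U_i)_{i\in I}}(y)=A\cup B\cup C$, where $D(y)=(v_i)_{i\in I}$, $v=v_{i_0}$ for some $i_0\in I$, and $U_i=X_{v_i}$ for all $i$ outside a finite set $J\subseteq I$. The first observation is that every image point $\pi(x')$ with $x'\in X_v$ lies in $\bigcup_{i\in I}X_{v_i}$ and in $\Omega'$, so it can meet $W_{(U_i)_{i\in I}}(y)$ only through the piece $C$ (it avoids $A\subseteq(\partial T)'$ and $B$, which excludes $\bigcup_{i\in I}X_{v_i}$). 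Thus it suffices to produce a neighbourhood $U$ of $x$ in $X_v$ with $\pi(U)\subseteq C$, i.e. such that for every $x'\in U$ and every $m\in I$ with $v_m\in D(\pi(x'))$ the representative of $\pi(x')$ in $X_{v_m}$ lies in $U_m$. For the indices $m\in I\setminus J$ this is automatic because $U_m=X_{v_m}$, so only the finitely many indices in $J$ must be controlled.

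The key input is that $D(y)$ is a subtree of $T$ and that the gluing maps propagate continuously along it. I would first record that $D(y)$ is geodesically convex: if $v,v'\in D(y)$ then, reducing any chain of single-edge identifications to the geodesic $[v,v']$ and using that each identification across an edge $e$ is reversible (it reads $\Lambda_{e,w}(x_e)=\Lambda_{e,w'}(x_e)$), the point $y$ propagates across every edge of $[v,v']$, so $[v,v']\subseteq D(y)$. Next, for each $m\in J$ let $v_{i_0}=w_0,w_1,\dots,w_\ell=v_m$ be the geodesic and $f_1,\dots,f_\ell$ its edges. Composing the partial maps $\Lambda_{f_k,w_k}\circ\Lambda_{f_k,w_{k-1}}^{-1}$ defines a propagation map $\phi_m$ on the closed set $S_m\subseteq X_v$ of points carried all the way to $v_m$; since each $\Lambda_{f,w}$ is an embedding of a compact space (hence a homeomorphism onto its closed image, with continuous inverse there), $\phi_m:S_m\map X_{v_m}$ is continuous, $x\in S_m$, and $\phi_m(x)=y$ in $X_{v_m}$. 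Moreover $\phi_m(x')$ is exactly the representative of $\pi(x')$ in $X_{v_m}$ whenever $x'\in S_m$.

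With these in hand the neighbourhood is assembled directly. Since $y\in U_m$ for each $m\in J$ and $\phi_m$ is continuous at $x$ with $\phi_m(x)=y$, choose a neighbourhood $N_m$ of $x$ in $X_v$ with $\phi_m(N_m\cap S_m)\subseteq U_m$, and set $U=U_{i_0}\cap\bigcap_{m\in J}N_m$ (taking $U_{i_0}=X_v$ if $i_0\notin J$). For $x'\in U$ and any $m\in J$ with $v_m\in D(\pi(x'))$ we have $x'\in S_m\cap N_m$, hence $\phi_m(x')\in U_m$; together with $x'\in U_{i_0}$ and the automatic indices in $I\setminus J$ this shows $\pi(x')\in C$, so $\pi(U)\subseteq W_{(U_i)_{i\in I}}(y)$, giving continuity at $x$. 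I expect the main obstacle to be exactly this control of nearby points $x'$ whose domain is strictly larger than that of a generic point — points lying on the edge limit sets $\Lambda_{e,v}(X_e)$, which get identified with points of neighbouring vertex spaces — since for these one must guarantee simultaneously that the image lands in $U_m$ in every constrained vertex $X_{v_m}$; the continuity of the composite propagation maps $\phi_m$ together with the subtree property of $D(y)$ is precisely what makes this possible.
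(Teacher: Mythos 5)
Your proof is correct, and it cannot coincide with the paper's argument for the simple reason that the paper gives none: the lemma is stated with a \qed and the remark ``Proof of the above lemma is easy, so we left it for the reader.'' What you have written is exactly the argument that remark is gesturing at (it is essentially the one in Dahmani's original construction), and you have correctly isolated the two inputs that make ``easy'' actually go through. First, since $v\in D(\pi(x))$, the image of $X_v$ meets a basic set $W_{(U_i)_{i\in I}}(\pi(x))=A\cup B\cup C$ only in the piece $C$, and all but finitely many of the constraints defining $C$ are vacuous because $U_i=X_{v_i}$ off a finite set $J$; this reduces the problem to finitely many conditions. Second, for each constrained vertex $v_m$ the representative of $\pi(x')$ in $X_{v_m}$ is obtained by propagating along the geodesic $[v,v_m]$, and each one-edge propagation $\Lambda_{f,w_k}\circ\Lambda_{f,w_{k-1}}^{-1}$ is continuous on the closed set $\Lambda_{f,w_{k-1}}(X_f)$ because $X_f$ is compact and the $\Lambda$'s are embeddings into Hausdorff spaces. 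Your justification of the convexity of domains (reducing a backtracking chain of identifications using injectivity of $\Lambda_{f,w}$) is also sound, and worth recording: the paper uses this convexity repeatedly (e.g.\ in the Hausdorffness and filtration lemmas, where it speaks of ``the unique geodesic from $D(x)$ to $D(y)$'') without ever proving it. The only cosmetic caveat is that in the quotient one should read $\bigcup_{i\in I}X_{v_i}$ as $\bigcup_{i\in I}\pi(X_{v_i})$ and use the asserted injectivity of $\pi|_{X_{v_m}}$ to know that ``the representative in $X_{v_m}$'' is well defined and equals $\phi_m(x')$; you use this implicitly and it is consistent with the paper's conventions.
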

	Proof of the above lemma is easy, so we left it for the reader.
	\begin{lemma}
		[Regularity] The space $(X,\TT)$ is regular, i.e. for all $x\in X$ and for all $W_n(x)$ there exists $W_m(x)$ such that $\overline{W_m(x)}\subseteq W_n(x)$.
	\end{lemma}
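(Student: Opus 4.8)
The plan is to treat the two kinds of points separately, exactly along the split in the definition of the basic neighbourhoods, producing the shrunken set $W_m(x)$ by shrinking the vertex-boundary data (using that each $X_v$ is compact metrizable, hence normal and regular) together with a shift of the tree-depth index. First I would dispose of the case $x=\eta\in(\partial T)'$, where I expect $\overline{W_m(\eta)}\subseteq W_n(\eta)$ for all sufficiently large $m$. Take $z\in\overline{W_m(\eta)}$ and suppose $D(z)\not\subset T_n(\eta)$, so some vertex $w\in D(z)$ lies outside $T_n(\eta)$; equivalently $[v_0,w]$ leaves $[v_0,\eta)$ at depth at most $n$. Let $e$ be the edge of $[w,\eta)$ immediately past this branch point, so that $T_m(\eta)$ lies strictly beyond $e$ on the $\eta$-side while $w$ lies on the far side. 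In the generic situation the $\eta$-side vertex of $e$ is not in $D(z)$, so Lemma \ref{avoiding an edge} gives a neighbourhood $W_j(z)$ with $X_e\cap W_j(z)=\emptyset$; since every point of $W_m(\eta)$ has its entire domain inside $T_m(\eta)$, which is separated from $D(z)$ by $e$, shrinking $j$ yields $W_j(z)\cap W_m(\eta)=\emptyset$, contradicting $z\in\overline{W_m(\eta)}$. The subcase $z\in(\partial T)'$ is the same divergence-of-rays argument. Hence $D(z)\subset T_n(\eta)$ and $z\in W_n(\eta)$.

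Next, for $x\in\Omega'$ with $D(x)=(v_i)_{i\in I}$ and data $U_i\in\UU(v_i)$ equal to $X_{v_i}$ off a finite set $J$, I would invoke regularity of each $X_{v_j}$, $j\in J$, to choose open $U_j'\ni x$ with $\overline{U_j'}\subseteq U_j$ in $X_{v_j}$, and set $U_i'=X_{v_i}$ for $i\notin J$. The candidate is $W':=W_{(U_i')_{i\in I}}(x)$. The key geometric observation is that $U_j'\subseteq U_j$ forces the subtree inclusion $T_{v_j,U_j'}\subseteq T_{v_j,U_j}$, while the strict containment $\overline{U_j'}\subseteq U_j$ upgrades this to an inclusion of boundaries $\partial T_{v_j,U_j'}\subseteq\partial T_{v_j,U_j}$ with no new limiting directions escaping. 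To show $\overline{W'}\subseteq W$ I take $z\in\overline{W'}$ and argue according to how $D(z)$ meets $D(x)$: if $z\in X_{v_j}$ for some $j\in J$, then $z$ is a limit in $X_{v_j}$ of points of $U_j'$, so $z\in\overline{U_j'}\subseteq U_j$ and $z$ lies in the $C$-part of $W$; if $z$ is a tree-boundary point or has domain disjoint from the controlled vertices, the subtree and boundary inclusions above place $z$ in the $A$- or $B$-part of $W$.

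The step I expect to be the main obstacle is the analysis of limit points $z$ whose domain is infinite, namely the parabolic points of edge groups whose $\partial T$-representatives were identified into $X$. For such $z$ the domain $D(z)$ is a subtree straddling many branches of $T$, so in the $(\partial T)'$ case above the separating edge $e$ may well have its $\eta$-side vertex inside $D(z)$, and Lemma \ref{avoiding an edge} no longer applies directly; likewise in the $\Omega'$ case $D(z)$ may reach into both the transition region between $T_{v_j,U_j'}$ and $T_{v_j,U_j}$ and far outside it. This is exactly where conditions (3) and (4) of Theorem \ref{combination theorem} enter: relative acylindricity forces the stabilisers, hence the overlaps of distinct infinite domains, to be parabolic, and the hypothesis on $\bigcap\Lambda(G_{e_i})$ makes the intersection of infinite domains uniformly finite, so only finitely much of $D(z)$ can sit in the transition region. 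Once this finiteness is secured, the vertex-boundary shrinking $\overline{U_j'}\subseteq U_j$ absorbs the remaining limiting behaviour and yields $z\in W_n(x)$, establishing regularity.
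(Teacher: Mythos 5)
The proposal is correct and takes essentially the same approach as the paper's own proof: for $\eta\in(\partial T)'$ one increases the tree-depth index, and for $x\in\Omega'$ one uses regularity of each compact metrizable $X_{v_i}$ to shrink the vertex-boundary data $U_i$ to $V_i$ with $\overline{V_i}\subseteq U_i$. Your closing discussion of points with infinite domains is extra care on a point the paper's terse proof glosses over, not a different route.
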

	\begin{proof}
		Let $\eta\in (\partial T)'$ and let $W_n(\eta)$ be a neighborhood of $\eta$ in $X$. Let $v$ be a vertex from where the subtree $T_n(x)$ starts. Let $e$ be the last edge on the geodesic segment $[v_0,v]$. Observe that closure of $W_n(\eta)$ contains only extra points coming from $X_e$. Then, by definition of neighborhoods, there exist $m$ sufficiently large such that $\overline{W_m(\eta)}\subseteq W_n(\eta)$.
		
		Let $x\in \Omega'$ and let $W_n(x)$ be a neighborhood around $x$ in $X$. let $D(x)=\{v_i\}_{i\in I}$ and let $U_i$ is a open neighborhood in $X_{v_i}$ containing $x$. Observe that only extra points in $\overline{W_n(x)}$ are coming from the closure of $U_i$ in $X_{v_i}$. Since each $X_{v_i}$ is regular, choose a neighborhood $V_i$ of $x$ such that $\overline{V_i}\subseteq U_i$. By using $V_i's$, we see that there exist $m$ such that $\overline{W_m(x)}\subseteq W_n(x)$. 
	\end{proof}
	{\bf Note:} Now, by previous lemmas, we see that the topology on $X$ is second countable, Hausdorff, and regular. Then by Urysohn's metrization theorem, we see
	that $X$ is metrizable space. Also, $X$ is a perfect space as every point of $X$ is a limit point. Finally, we have the following convergence criterion:
	
	{\bf Convergence criterion :} \label{convergence criterion} A sequence $(x_n)_{n\in \mathbb{N}}$ in $X$ converges to a point $X$ if and only if $\forall n, \exists m_0\in \mathbb{N}$ such
	that $\forall m>m_0, x_m \in W_n(x)$.
	
	Next, we prove that $X$ is compact. Proof of this is almost the same as \cite[Theorem 3.11]{tomar}. For the sake of completeness, we are giving the proof.
	\begin{lemma}
		[Compactness] The space $(X,\TT)$ is compact.
	\end{lemma}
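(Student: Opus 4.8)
The plan is to deduce compactness from sequential compactness. The preceding ``Note'' already records that $(X,\TT)$ is second countable and metrizable (via Urysohn), so for a metric space it is enough to show that every sequence admits a convergent subsequence, and I would verify convergence throughout by the stated convergence criterion together with the explicit descriptions of the basic neighbourhoods $W_n(x)$ and $W_m(\eta)$. So I start from an arbitrary sequence $(x_n)$ in $X$, fix a base vertex $v_0\in T$, and build a limit by a recursive extraction that walks along the tree.

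At a current vertex $v$ I attach to each surviving term $x_n$ a point $z_n\in X_v$: if $v\in D(x_n)$, let $z_n$ be the unique preimage of $x_n$ in $X_v$ (unique since $\pi|_{X_v}$ is injective); otherwise the geodesic from $v$ towards the subtree $D(x_n)$ leaves $v$ through a first edge $e_n$, and I let $z_n$ be any point of $\Lambda_{e_n,v}(X_{e_n})\subset X_v$. Since $X_v$ is compact metrizable, I pass to a subsequence along which $z_n\to z\in X_v$. Now comes the dichotomy. If $z\notin\Lambda_{e,v}(X_e)$ for every edge $e$ at $v$, equivalently $D(\pi(z))=\{v\}$, then I claim $x_n\to\pi(z)$: fixing a neighbourhood $U$ of $z$ in $X_v$, the terms with $v\in D(x_n)$ eventually have $z_n\in U$ and so lie in the part $C$ of $W_n(\pi(z))$, while the terms with $v\notin D(x_n)$ have first edge $e_n$ satisfying $z_n\in\Lambda_{e_n,v}(X_{e_n})\cap U\neq\emptyset$ for large $n$, hence $D(x_n)\subset T_{v,U}$ and they lie in the part $B$; this is exactly the convergence criterion. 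If instead $z=\Lambda_{e,v}(x_e)$ lies in an edge limit set, I look at the tail: if only finitely many terms have domain strictly beyond $e$, the same neighbourhood analysis gives convergence to the edge point $\pi(z)$, and otherwise I advance to the opposite endpoint $v'$ of $e$, keep only the subsequence whose domains lie beyond $e$, and repeat the construction at $v'$.

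If this recursion halts, it halts at a point of $\Omega'$ and we are done. If it never halts, the chosen vertices $v_0,v_1,v_2,\dots$ form a geodesic ray determining some $\xi\in\partial T$; a diagonal extraction across the stages then produces a single subsequence whose domains penetrate arbitrarily far along this ray, i.e.\ eventually lie in every $T_m(\xi)$. By the definition of $W_m(\xi)$ this means $x_n\to\xi$ in $(\partial T)'$, or $x_n$ converges to the parabolic point with which $\xi$ has been identified when the ray is an end of an infinite domain. In either case the sequence converges, so $X$ is compact.

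The step I expect to be the main obstacle is this final bookkeeping. First, I must organize the nested subsequences produced at successive vertices into one diagonal subsequence that simultaneously works at every stage. Second, and more delicate, I must make the construction compatible with the identifications built into $X$: the ends of infinite (parabolic) domains glued to boundary points of $T$, where a single limit point $z$ may sit in several edge limit sets at once (precisely the situation controlled by condition (4) of Theorem \ref{combination theorem}). I need to check that the candidate limit is genuinely the point of $X$ toward which the construction points, and in particular that it is independent of the arbitrary choices of $z_n$ inside the edge limit sets $\Lambda_{e_n,v}(X_{e_n})$. Once these are pinned down, everything else is a routine unwinding of the definitions of $T_{v,U}$, $W_n(x)$, and $W_m(\xi)$ against the convergence criterion.
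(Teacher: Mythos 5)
Your overall strategy---sequential compactness, verified term by term against the convergence criterion---is the same as the paper's, but your extraction scheme is organized quite differently. The paper makes a single trichotomy at the outset: choosing $v_n\in D(x_n)$, it passes to a subsequence along which the Gromov products $\langle v_n,v_m\rangle_{v_0}$ either tend to infinity (giving convergence to a point of $(\partial T)'$, or to the parabolic point with which that end has been identified) or stabilize at a value $N$; in the latter case all the geodesics $[v_0,v_n]$ share the prefix $g=[v_0,v']$ of length $N$ and branch at $v'$, so the candidate limit is located inside the single compact space $X_{v'}$, either directly (if infinitely many $v_n=v'$) or as an accumulation point of the limit sets of the distinct edges leaving $v'$. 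Your recursive walk reaches the same two outcomes---a vertex space after finitely many steps, or a geodesic ray---but pays for it with the diagonal extraction across stages, which the paper's one-shot Gromov-product argument avoids entirely. Both arguments then face the identical final verification that convergence of ``shadows'' in a vertex space forces convergence in $X$ via the sets $B$, $C$ and the subtrees $T_{v,U}$, exactly as you describe.

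The point you flag but do not close---independence of the limit from the arbitrary choices of $z_n$ inside $\Lambda_{e_n,v}(X_{e_n})$---is the one genuine gap, and it cannot be waved away: if the first edges $e_n$ at $v$ are pairwise distinct and the edge limit sets are not singletons, two different systems of choices could a priori converge to two different points $a\neq b$ of $X_v$, and your neighbourhood analysis would then exhibit \emph{both} $\pi(a)$ and $\pi(b)$ as limits of $(x_n)$, contradicting Hausdorffness. What closes the gap is that the sets $\Lambda_{e_n,v}(X_{e_n})$ for distinct edges at $v$ are distinct translates of the limit set of a relatively quasi-convex, hence dynamically quasi-convex, subgroup of $G_v$; by the convergence action of $G_v$ on $X_v$ (equivalently, by dynamical quasi-convexity) their diameters tend to $0$, so after a further extraction they collapse uniformly to a single point of $X_v$ and the choice of $z_n$ is immaterial. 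You should state and invoke this explicitly at the step where the $e_n$ are distinct. Note that the paper's own proof elides the same issue with the sentence ``for each edge $e$, $X_e$ is a singleton,'' which is inherited from the parabolic-edge-group setting of \cite{tomar} and is not literally true under the hypotheses of Theorem \ref{combination theorem}; the shrinking-translates argument above is what is actually needed there as well.
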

	\begin{proof}
		It is sufficient to prove that $X$ is sequentially compact. Let $(x_n)_n$ be a sequence in $X$. Let us choose a vertex $v$ in $T$ and for each $n$, choose a vertex $v_n$ (if $x_n\in (\partial T)'$ then take $v_n=x_n$) in $D(x_n)$. We see that up to extraction of a sequence, Gromov inner products $\langle v_n,v_m\rangle_v$ either remain bounded or go to infinity. In the latter case, $v_n$ converges to a point $\eta\in (\partial T)'$. It may be possible that $x$ is the point that is identified with some parabolic point of edge groups. In any case, by definition of neighborhoods, we see that there is a subsequence of $x_n$ converges to $x$. In the first case up to extracting a subsequence, we assume that Gromov inner products $\langle v_n,v_m\rangle$ is equal to some constant $N$. Let $g_n$ be the geodesic from $v$ to $v_n$, then there exist geodesic $g = [v,v']$ of length $N$ such that $g$ lies in each $g_n$. For different $n$ and $m$, $g_n$ and $g_m$ do not have a common prefix whose length is longer than $g$. We have two cases: (1) There exists s subsequence $(g_{n_k})$ of $(g_n)$ such that $g_{n_k}=g$. Then, in this case, as $X_{v'}$ is compact; there exists a subsequence of $(x_n)$ converging to a point of $X_{v'}$. (2) There exists a subsequence $(g_{n_k})$ of $(g_n)$ such that each $g_{n_k}$ is strictly longer than $g$. Let $e_{n_k}$ be the edges just after $v'$; they all are distinct. As for each edge $e$, $X_e$ is a singleton, so $(X_{e_{n_k}})$ forms a sequence of points in $X_{v'}$. Since $X_{v'}$ is compact, there exists a subsequence that converges to a point in $X_{v'}$. Then, by convergence criterion, we see that there exists a subsequence of $(x_n)_n$ converging to this point of $X_{v'}$. (It may be possible that all the $X_{n_k}$ are equal to $x$ for some $x$ in $X_{v'}$, but the sequence also converges to $x$ in this situation.)
	\end{proof}
	\section{Proof of Theorem \ref{combination theorem}}\label{4}
	First of all, we prove that the natural action of $G$ on $X$ is convergence. We prove it with the help of the following lemmata.
	\begin{lemma}
		[Large translation] Let $(g_n)_{n\in \mathbb{N}}$ be a sequence in $G$. Assume that, for some (hence any) vertex $v_0\in T$, $dist(v_0,g_nv_0) \rightarrow \infty$. Then, there is a subsequence $(g_{n_k})_{k\in \mathbb{N}}$, two points $x\in X$ and $\eta\in (\partial T)'$ such that for all compact $K\subset (X\setminus \{\eta\})$, $g_{n_k} K$ converges uniformly to $x$. 
	\end{lemma}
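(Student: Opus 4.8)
The plan is to establish the required convergence dynamics first on the Bass--Serre tree $T$, where it is elementary, and then to transport it to $X$ through the equivariant domain maps $z\mapsto D(z)$ and the neighbourhood basis $\{W_n(\cdot)\}$. Throughout I would use that the construction of $X$ is $G$-equivariant, so that $D(g z)=g\,D(z)$ for all $g\in G$ and $z\in X$.

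\emph{Tree dynamics.} Passing to a subsequence (still written $g_n$), the geodesics $[v_0,g_nv_0]$ converge to a ray $[v_0,\xi^+)$ and the geodesics $[v_0,g_n^{-1}v_0]$ converge to a ray $[v_0,\xi^-)$, with $\xi^+,\xi^-\in\partial T$; set $D_n=\dist(v_0,g_nv_0)\to\infty$. The engine of the whole argument is the tree estimate
\[
\langle g_nu,\xi^+\rangle_{v_0}\;\geq\; D_n-\langle u,\xi^-\rangle_{v_0}-o(1),\qquad u\in\VV(T),
\]
which follows from $\dist(v_0,g_nu)=\dist(g_n^{-1}v_0,u)$ and the tripod identity for the points $v_0,g_nv_0,g_nu$ (using that $g_n^{-1}v_0$ runs deep along $[v_0,\xi^-)$, so that $\langle u,g_n^{-1}v_0\rangle_{v_0}=\langle u,\xi^-\rangle_{v_0}$ for $n$ large). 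In words: any vertex whose geodesic from $v_0$ branches off $[v_0,\xi^-)$ early is pushed by $g_n$ arbitrarily deep along $[v_0,\xi^+)$, and the depth is uniform in $u$ once $\langle u,\xi^-\rangle_{v_0}$ is bounded. Note this holds even when $\dist(v_0,u)$ is large, which is essential because domains are genuine subtrees.

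\emph{Choice of the two limit points.} I would set $\eta:=[\xi^-]$ and let $x$ be the point of $X$ determined by $\xi^+$. Two things must be verified. First, that $\eta\in(\partial T)'$: if $\xi^-$ were instead one of the boundary points identified with a parabolic point $p$ of an edge group, then the ray $[v_0,\xi^-)$ would be an end of the infinite domain $D(p)$, and I would exclude this (after discarding finitely many terms) using hypotheses (3)--(4), since relative acylindricity forces domains meeting a long geodesic to be parabolic and the intersection condition on the $\Lambda(G_{e_i})$ prevents such domains from being absorbed along the repelling direction of a large-translation sequence. Second, $x$ may legitimately lie in $(\partial T)'$ (when $\xi^+$ survives as a tree-boundary point) or in $\Omega'$ (when $\xi^+$ is absorbed into an infinite domain $D(x)$ containing a subray of $[v_0,\xi^+)$); the two cases are handled in parallel below.

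\emph{Uniform transfer to $X$, and the main obstacle.} Fix a basic neighbourhood $W_N(x)$ and a compact $K\subset X\setminus\{\eta\}$; I must produce $k_0$ with $g_{n_k}K\subset W_N(x)$ for $k\geq k_0$. The crux, and the hard part, is a \emph{uniform penetration bound}: there exists $M$ such that every $z\in K$ has its \emph{entire} domain $D(z)$ disjoint from the deep region $T_M(\eta)=\{w:\langle w,\xi^-\rangle_{v_0}>M\}$. I would prove this by contradiction and compactness: if points $z_j\in K$ had domain vertices $u_j\in D(z_j)$ with $\langle u_j,\xi^-\rangle_{v_0}\to\infty$, then along a convergent subsequence $z_j\to z_\infty\in K$ the neighbourhoods $W_m(z_\infty)$ would force $\xi^-$ to be a boundary point of $D(z_\infty)$, whence $z_\infty=\eta\in K$ by the very identification defining $(\partial T)'$, a contradiction. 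Granting this bound, every $u\in D(z)$ satisfies $\langle u,\xi^-\rangle_{v_0}\leq M$, so by the displayed estimate $\langle g_{n_k}u,\xi^+\rangle_{v_0}\geq D_{n_k}-M\geq N$ once $k$ is large, uniformly in $z\in K$; that is, $D(g_{n_k}z)=g_{n_k}D(z)\subset T_N(\xi^+)$. When $x\in(\partial T)'$ this is exactly $g_{n_k}z\in W_N(x)$. When $x\in\Omega'$ I would instead check that having $D(g_{n_k}z)$ deep toward $\xi^+$ places $g_{n_k}z$ in the $A\cup B\cup C$ description of $W_N(x)$, the three pieces accounting respectively for surviving tree-boundary points, points with domains inside $\bigcap_i T_{v_i,U_i}$, and points already on $\bigcup_j X_{v_j}$ close to $x$. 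Either way the bound on $k$ is uniform over $K$, which is precisely the asserted uniform convergence. The delicate steps are thus the uniform penetration bound and the verification that $\eta\in(\partial T)'$; once these are secured, the conclusion is a direct unwinding of the definitions of $W_N(\eta)$ and $W_N(x)$.
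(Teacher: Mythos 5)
Your overall strategy is sound and genuinely different from the paper's: you read the source--sink dynamics directly off the tree via the two limits $\xi^{+}=\lim[v_0,g_nv_0]$ and $\xi^{-}=\lim[v_0,g_n^{-1}v_0]$, prove a quantitative Gromov-product estimate, and obtain uniformity on $K$ from a penetration bound proved by compactness. The paper instead extracts $x$ as a subsequential limit of $g_nx_0$ using sequential compactness of $X$, shows that every translate $g_{n_k}X_v$ and all but at most one point of $(\partial T)'$ converge to $x$ by a tripod-center argument in $T\cup\partial T$, and then asserts uniformity by covering $K$ with a single $W_n(\xi)$. Your tree estimate is correct (in a tree one has exactly $\langle g_nv_0,g_nu\rangle_{v_0}=d(v_0,g_nv_0)-\langle u,g_n^{-1}v_0\rangle_{v_0}$), and your penetration bound, though only sketched, is recoverable: if $z_j\to z_\infty$ while $D(z_j)$ contains vertices arbitrarily deep toward $\xi^-$, an iteration along the edges of $[v,\xi^-)$ using the $B$ and $C$ parts of the neighborhoods $W(z_\infty)$ forces $[v,\xi^-)\subset D(z_\infty)$, hence $z_\infty=[\xi^-]$. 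Your handling of uniform convergence is in fact more quantitative than the paper's.

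The genuine gap is the claim that $\eta:=[\xi^-]$ lies in $(\partial T)'$, together with the proposed exclusion argument via hypotheses (3)--(4). That exclusion cannot work: nothing prevents $\xi^-$ from being an end of an infinite domain. Concretely, let $p$ be a parabolic point of an edge boundary with infinite domain $D(p)$, so that $Q=\mathrm{Stab}_G(p)$ is an amalgam $P_1\ast_P P_2$ whose Bass--Serre tree is $D(p)$, and let $h\in Q$ act hyperbolically on $D(p)\subset T$ (such $h$ exist whenever $P\neq P_1,P_2$). Then $g_n=h^n$ satisfies $\dist(v_0,g_nv_0)\to\infty$, yet both $\xi^{+}$ and $\xi^{-}$ are ends of $D(p)$ and are therefore identified with $p\in\Omega'$; the repelling point of this sequence is $p$ itself, not a point of $(\partial T)'$, and the convergence $g_{n_k}K\to p$ is uniform only for $K$ avoiding $p$. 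Relative acylindricity and condition (4) control intersections of distinct infinite domains, not the position of a single infinite domain relative to the axis of $g_n$, so they give you nothing here. The repair is not to exclude this case but to allow $\eta=[\xi^-]$ to lie in $\Omega'$: your penetration bound and the transfer step only ever use $K\subset X\setminus\{[\xi^-]\}$, so the whole argument goes through verbatim with $\eta\in X$, which is all that the subsequent corollary (the action of $G$ on $X$ is a convergence action) requires. You should be aware that the paper's own statement and proof share this blind spot, but since you are proving the statement as written, you must either supply the missing case or note that the conclusion holds with $\eta\in X$ rather than $\eta\in(\partial T)'$.
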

	\begin{proof}
		Let $x_0$ be an element in $X_{v_0}$. Since $X$ is sequentially compact, there exists a subsequence $(g_{n_k}x_0)_k$ of $(g_{n}x_{0})$ converging to a point $x\in X$. We still have that $dist(v_0,g_{n_k}v_0)\rightarrow \infty$. Note that the lenght of geodesic segment $[g_nv_0,g_nv_1]$ is equal to the length of geodesic segment $[v_0,v_1]$ for any vertex $v_1$ of $T$. Thus, for any $m$, there exists $n_m$ such that for all $k>n_m$, the geodesic segments $[v_0,g_{n_k}v_0]$,$[v_0,g_{n_k}v_1]$ have a common prefix of length more than $m$. Then, by convergence criterion \ref{convergence criterion}, for all $v\in V(T)$, $g_{n_k}x_0$ converges to $x$. Let $\eta_1,\eta_2\in (\partial T)'$. Since the triangles in $T\cup \partial T$ are degenerate. The triangle with vertices $v_0,\eta_1$ and $\eta_2$ has the center, a vertex $v$ in $T$. Then, for any $m$, the geodesic segments $[v_0,g_{n_k}v_0],[v_0,g_{_k}v]$ have a common prefix of length greater than $m$ for all sufficiently large $k$. Now, for at least one $\eta_i$ ($i=1,2$), the segments $[v_0,g_{n_k}v_0]$ and $[v_0,g_{n_k}\eta_i]$ have a common prefix of length at least $m$ for large $k$. Then, by convergence criterion, $g_{n_k}\eta_i$ converges to $x$ for at least one $y_i$. Then, there exists at most one $\eta\in (\partial T)'$ such that for all $y'\in X\setminus \{\eta\}$, $g_{n_k}y'$ converges to $x$.
		
		Let $K$ be a compact subset of $X\setminus\{\eta\}$. Then there exists a vertex $v_0$ (base vertex), a point $\xi\in (\partial T)'$ such that $W_n(\xi)$ contains $K$ and $\eta\notin W_n(\xi)$. Let $v$ be a vertex on the geodesic ray $[v_0,\xi)$ at distance $n$. By the discussion at the beginning of the proof, the sequence $g_{n_k}X_v$ converges uniformly to $x$, and the sequence $g_{n_k}W_n(\xi)$ converges uniformly to $x$. Hence the convergence is uniform on $K$. 
	\end{proof}
	\begin{lemma}
		[Small translation] Let $(g_n)_{n\in \mathbb{N}}$ be a sequence of distinct elements in $G$ such that for some (hence any) $v_0$, $dist(v_0,g_nv_0)$ is bounded. Then, there exists a subsequence $(g_{n_k})_k$, a vertex $v$, an element $x\in X_v$, and another point $y\in \Omega'$ such that for all compact $K\subset X\setminus\{y\}$, $g_{n_k}K$ converges uniformly to $x$.
	\end{lemma}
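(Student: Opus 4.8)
The plan is to reduce the dynamics of $(g_n)$ on $X$ to the (given) convergence dynamics of a single vertex group acting on its own boundary, and then to promote that behaviour through the neighbourhood basis $\{W_n(\cdot)\}$ to all of $X$. First I would pass to a subsequence so that $\dist(v_0,g_nv_0)=d$ is constant, and study the geodesics $[v_0,g_nv_0]$ in $T$. Extracting once more along common prefixes of these segments (the process terminates because the lengths are bounded by $d$), I reach one of two situations: either (Case 1) $g_nv_0=w$ is a fixed vertex, or (Case 2) the segments $[v_0,g_nv_0]$ share a common prefix $[v_0,p]$ and then leave $p$ through pairwise distinct edges $f_n$.

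In Case 1 every $g_n$ lies in the single coset $g_0G_{v_0}$, so writing $g_n=g_0h_n$ with $h_n\in G_{v_0}$ distinct and using that $G_{v_0}\acts X_{v_0}$ is a convergence action, I obtain a subsequence and points $a,b\in X_{v_0}$ with $h_{n_k}$ converging uniformly on compacta of $X_{v_0}\setminus\{b\}$ to $a$. In Case 2 I would first pass to a single $G_p$-orbit of edges (there are finitely many orbits since $\YY$ is finite) and write $f_n=g'_nf$ with $g'_n\in G_p$ in distinct cosets of $G_f$; then the gluing points $\Lambda_{f_n,p}(X_{f_n})=g'_n\Lambda_{f,p}(X_f)$ are distinct translates under the convergence action $G_p\acts X_p$, so after a further extraction they collapse uniformly to a single point $a\in X_p$, and the image subtrees beyond $f_n$ shrink toward $a$. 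In both cases the candidate attracting point is a point $x$ of some $X_v\subset\Omega'$ (namely $x=g_0a\in X_{g_0v_0}$, respectively $x=a\in X_p$).

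The heart of the argument is the promotion step: I must show that convergence toward $a$ measured in a single vertex boundary forces $g_{n_k}z$ to enter every $W_N(x)$, uniformly for $z$ ranging over a compact set that avoids the repelling point. For this I would track, for each $z\in X$, its \emph{shadow} at the relevant vertex — the first edge of the geodesic from that vertex toward $D(z)$ decides which set $T_{v,U}$ contains $g_{n_k}z$, exactly as in Lemma \ref{avoiding an edge} — and feed the vertex-boundary convergence into the description $W_N(x)=A\cup B\cup C$ via the convergence criterion. The repelling point is $y=b\in X_{v_0}\subset\Omega'$ in Case 1; in Case 2 I would apply the same analysis to $g_n^{-1}$ (whose displacement is again $d$), whose attracting point $a'\in\Omega'$ then serves as $y$, the uniqueness of this single exceptional point being extracted by the same bootstrap used at the end of the large-translation lemma. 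The hard part will be the uniformity in this promotion when the attracting or repelling point has an \emph{infinite} domain: there $W_N(x)$ constrains infinitely many coordinates simultaneously, and to control $g_{n_k}z$ across the whole domain at once I expect to need the hypotheses that infinite-domain points are parabolic and that intersections of infinite domains are uniformly finite, together with relative acylindricity, so that the data governing the convergence stays finite.
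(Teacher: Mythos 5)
Your high-level strategy coincides with the paper's: a dichotomy between ``the $g_n$ essentially stabilize a vertex'' and ``the orbit branches'', convergence dynamics on a single vertex boundary, promotion through the $W_n$-basis, and the $g_n^{-1}$ trick for the repelling point. The gap is in how you set up the dichotomy. Your Case 1 is the condition that $g_nv_0$ is a \emph{fixed} vertex, i.e.\ it is tied to the basepoint. The paper's Case 1 is the strictly weaker condition that for \emph{some} vertex $v$ a subsequence of $(g_n)$ lies in a single coset $G_vg$, and its Case 2 assumes this fails for \emph{every} vertex --- a hypothesis that is invoked explicitly inside the Case 2 argument. The two dichotomies are genuinely different: take $G=A\ast_C B$, let $v_0=v_B$ be the vertex fixed by $B$, and let $g_n=a^n$ for $a\in A$ loxodromic with $\langle a\rangle\cap C$ finite. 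Then $\operatorname{dist}(v_0,g_nv_0)=2$ and the vertices $g_nv_0=a^nv_B$ are pairwise distinct, so you land in your Case 2 with branch vertex $p=v_A$ and edges $f_n=a^ne$, even though the entire sequence sits inside the single vertex group $G_p=A$.

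In that situation your Case 2 mechanism does not close. The statement ``the image subtrees beyond $f_n$ shrink toward $a$'' only controls points $z$ whose image $g_{n_k}z$ actually lands beyond $f_{n_k}$; for $z\in X_p$ one has $g_nz=a^nz\in X_p$, which never leaves $X_p$, and its convergence to $a^{+\infty}$ is governed by the convergence action of $G_p$ on $X_p$ --- that is, by the paper's Case 1 argument, which you have not run because $p\neq v_0$. Relatedly, the candidate attracting point you extract, namely the Hausdorff limit of the gluing sets $\Lambda_{f_n,p}(X_{f_n})$, depends only on the edges $f_n$ (equivalently on the vertices $g_nv_0$), whereas for shallow branching the attracting point of $(g_n)$ on $X$ is not determined by that combinatorial data alone: replacing $g_n$ by $g_nc_n$ with $c_n\in G_{v_0}$ leaves every $f_n$ unchanged but can change the dynamics on $X$. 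The repair is to adopt the paper's case division: first test whether a subsequence of $(g_n)$ lies in a single coset $G_vg$ for \emph{some} vertex $v$, and if so run the vertex-boundary convergence argument there, taking the extra care the paper takes when the repelling point $y$ lies in an adjacent edge boundary (if $y$ is parabolic one must show $g_{n_k}y$ is nevertheless attracted, since otherwise a whole branch of $T$ is shadowed by $y$); only when no vertex stabilizer captures a subsequence is the collapsing-subtree argument valid on all of $X$, and there it produces convergence with no exceptional point at all. Your opening reduction to constant displacement, your Case 1 computation, and your flagged concerns about infinite domains are all consistent with the paper; it is the intermediate regime --- branching over $v_0$ but confinement in some other vertex stabilizer --- that your proposal leaves uncovered.
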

	\begin{proof}
		There are two cases to consider:
		
		Case(1) Assume that for some vertex $v$, there exists a sequence $(h_n)_n\subset G_v$ and an element $g\in G$ such that $g_n=h_ng$. Since $G_v$ acts as a convergence group on $X_v$, there exists a subsequence $(g_{n_k})_k$ of $(g_n)_n$ and a point $y\in X_{g^{-1}v}$ such that for all compact $K\subseteq X_{g^{-1}v}\setminus\{y\}$ such that $g_{n_k}K$ converges uniformly to a point $x\in X_v$. Suppose $y$ does not belong to any edge boundary adjacent to $X_{g^{-1}v}$. Then for any $z\in X\setminus \{X_{g^{-1}v}\}$ there is a geodesic segment from $g^{-1}v$ to $w$, where $z\in X_w$. This geodesic segment contains an edge adjacent to $g^{-1}v$. Then $g_{n_k}X_{e_n}$ converges uniformly to $x$, and hence $g_{n_k}z$ converges to $x$. Same is true for points in $(\partial T)'$. Now, suppose that $y$ is in $X_{e_n}$ for some $e_n$ adjacent to $g^{-1}v$. If $X_{e_n}$ is a singleton, then $y$ is a parabolic point. Therefore $g_{n_k}y$ also converges to $x$ otherwise $gy$ becomes a conical limit point which is not possible. Using the same argument as above, we see that for all $z\in X$, $g_{n_k}z$ converges to $x$. If $X_{e_n}$ has at least $2$-elements and $y\in X_{e_n}$ is not parabolic, then again, by the same argument, for all $z\in X\setminus \{y\}$, $g_{n_k}z$ converges to $x$.
		
		Case(2) Suppose such a sequence $(h_n)_n$ and vertex $v$ do not exist. Then, up to extracting a subsequence, we can assume that $dist(v_0,g_nv_0)$ is constant. Let us choose a vertex $v$ and a subsequence $(g_{n_k})_k$ such that the geodesic segments $[v_0,g_{n_k}v_0]$ share a common segment $[v_0,v]$, and the edges $e_{n_k}$ just after $v$ all are distinct. Note that the sequence $(X_{n_k})_k$ converges to a point $x$ (say). Clearly, $g_{n_k}X_{v_0}$ converges uniformly to $x$. Let $\eta\in (\partial T)'$ then the geodesics rays $[g_{n_k},\eta)$ do not pass through $v$ for sufficiently large $k$ for if $v$ is there, then, for infinitely many $k$, $g_{n_k}^{-1}v=w$ for some $w$ on the geodesic ray $[v_0,\eta)$ and we are in the first case, a contradiction. Then, it is clear that $g_{n_k}\eta$ converges to $x$. Let $z\in X\setminus\{X_{v_0}\}$. Again, we see that the geodesic segments $[g^{-1}v_0,z]$ do not pass through the vertex $v$. Thus, by definition of neighborhoods, we see that $g_{n_k}z$ converges to $x$. Hence, for all compact $K\subset X\setminus\{x\}$, we see that $g_{n_k}K$ converges uniformly to $x$. 
	\end{proof}
	Using the above two lemmata, we have the following:
	\begin{cor}
		The action of $G$ on $X$ is convergence.  \qed
	\end{cor}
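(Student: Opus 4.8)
The plan is to obtain the corollary directly from the two preceding lemmata by a dichotomy on translation length in the Bass--Serre tree, with essentially no additional work. First I would fix a base vertex $v_0\in T$ and begin from an arbitrary sequence of distinct elements $(g_n)_{n\in\mathbb{N}}$ in $G$, which is the setting in which a convergence action is to be tested (a constant sequence cannot exhibit the required dynamics). The key observation is that the non-negative integers $\dist(v_0,g_nv_0)$ partition the analysis into exactly two regimes: after passing to a subsequence, either $\dist(v_0,g_nv_0)\to\infty$, or $\dist(v_0,g_nv_0)$ remains bounded.

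In the first regime I would invoke the Large translation lemma, which supplies a subsequence $(g_{n_k})_k$, a point $x\in X$ and a point $\eta\in(\partial T)'$ such that $g_{n_k}K$ converges uniformly to $x$ for every compact $K\subset X\setminus\{\eta\}$; taking the repelling point to be $\eta$ gives precisely the convergence behaviour demanded by the definition. In the second regime I would instead apply the Small translation lemma --- here the standing hypothesis that the $g_n$ are distinct is exactly what that lemma requires --- to produce a subsequence $(g_{n_k})_k$, a vertex $v$, an attracting point $x\in X_v\subset X$ and a repelling point $y\in\Omega'\subset X$ with the same uniform convergence on every compact subset of $X\setminus\{y\}$.

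Since every sequence of distinct elements lands in one of these two cases, and each case yields a subsequence together with a pair of points witnessing uniform convergence off the repelling point, the two lemmata combine to verify the defining property of a convergence group, which proves the corollary. The only point requiring care is the dichotomy itself: because $\dist(v_0,g_nv_0)$ takes values in $\mathbb{N}$, a bounded subsequence may even be taken to have constant translation length, so the hypotheses of the two lemmata genuinely exhaust all possibilities and no residual case survives. I expect no real obstacle here --- all of the analytic content has already been absorbed into the Large and Small translation lemmata, and the corollary is their formal combination.
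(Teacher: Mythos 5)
Your proposal is correct and is exactly the argument the paper intends: the corollary is stated with an immediate \qed precisely because it is the formal combination of the Large translation and Small translation lemmata via the dichotomy on whether $\operatorname{dist}(v_0,g_nv_0)$ is bounded or tends to infinity along a subsequence. Nothing in your write-up deviates from or adds to what the paper leaves implicit.
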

	By \cite[Lemma 6.1]{tomar}, we see that every point of $(\partial T)'$ is conical for the action of $G$ on $X$. Also, by \cite[lemma 6.2]{tomar}, every point in $\Omega'$ which is the image of a conical limit point in some vertex stabilizer's boundary is conical for $G\curvearrowright X$.
	
	To prove that the action of $G$ on $X$ is geometrically finite, it remains to show that every point in $\Omega'$, which is the image of a bounded parabolic point in some vertex stabilizer's boundary, is bounded parabolic for $G\curvearrowright X$. It follows from the following lemma:
	\begin{lemma}
		Every point in $\Omega'$, which is the image of a bounded parabolic point in some vertex stabilizer's boundary, is bounded parabolic for the action of $G$ on $X$.
	\end{lemma}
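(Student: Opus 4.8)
The plan is to show that if $x\in\Omega'$ is the image of a bounded parabolic point $p\in X_v$, then $\Gamma:=Stab_G(x)$ is a parabolic subgroup of $G\curvearrowright X$ acting cocompactly on $X\setminus\{x\}$. Write $D:=D(x)$ for the domain. Since the gluing relation $\sim$ is the transitive closure of edge-identifications and $T$ is a tree, any two vertices of $D$ are joined by a path along which $x$ lies in each intermediate edge's limit set; in a tree this path is the geodesic, so $D$ is a subtree. For each $w\in D$ the point $x$ is the image of a bounded parabolic point $p_w\in X_w$ with maximal parabolic subgroup $P_w:=Stab_{G_w}(p_w)$. Because $\pi|_{X_w}$ is injective and $\Gamma$ fixes $x$, the vertex stabilizer of $w$ in $\Gamma$ equals $P_w$, and conversely every $P_w$ fixes $p_w$ hence $x$ and so lies in $\Gamma$. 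Thus $\Gamma$ preserves $D$ and acts on it with vertex groups $P_w$ and edge groups $P_w\cap G_e$, the induced maximal parabolics in the edge groups (using hypothesis (2)).

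First I would establish that $\Gamma$ is parabolic. It is infinite since it contains $P_v$. For the absence of loxodromics, take $g\in\Gamma$ and examine its action on $T$. If $g$ is elliptic on $T$ then, preserving the invariant subtree $D$, it fixes a vertex $w\in D$, so $g\in P_w$ is parabolic or elliptic in $X_w$ and, by the Remark relating limit sets and maximal parabolics, is not loxodromic in $X$. If $g$ is hyperbolic on $T$ (which forces $D$ infinite), its axis $L$ lies in the invariant subtree $D$; since $D$ is infinite its ends are identified with $x$ in the construction of $X$, so both ends of $L$ project to $x$. Hence $g$ has only the single fixed point $x$ in $X$: a second fixed point $y$ would force a $g$-invariant domain containing $L$, and then condition (4), that the intersection of the limit sets along $L$ is the singleton $\{x\}$, would give $y=x$. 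Therefore no element of $\Gamma$ is loxodromic, $x$ is a parabolic point, and $\Gamma$ is its maximal parabolic subgroup.

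The main work is cocompactness of $\Gamma\curvearrowright X\setminus\{x\}$. I would first argue that $\Gamma$ acts cocompactly on the domain tree $D$, i.e. that $D/\Gamma$ is finite, so that $\Gamma$ is the fundamental group of a finite sub-graph-of-groups of $G(\YY)$, a subgroup of the natural cone-locus. This is where hypotheses (2) and (4) together with relative acylindricity enter: relative acylindricity forces the stabilizer of any sufficiently long segment of $D$ to be a common parabolic in the edge groups it meets, while condition (4) makes the intersection of the limit sets (equivalently of the infinite domains) along $D$ collapse to $\{x\}$; together these bound the number of $\Gamma$-orbits of edges of $D$ and yield finiteness of $D/\Gamma$ (the finite case $D$ finite being immediate). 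I would then fix a finite subtree $D_0\subseteq D$ with $\Gamma D_0=D$.

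Finally I would build an explicit compact fundamental set. For each $w\in D_0$ the point $p_w$ is bounded parabolic in $X_w$, so $P_w$ acts cocompactly on $X_w\setminus\{p_w\}$; choose a compact $K_w\subseteq X_w\setminus\{p_w\}$ with $P_wK_w=X_w\setminus\{p_w\}$, and note $\overline{K_w}$ is bounded away from $p_w$. Let $K^*$ be the set of points $y\in X$ whose domain projects (by nearest-point projection onto the subtree $D$) into $D_0$ and whose attaching point at that projection vertex lies in the corresponding $K_w$. I claim $K^*$ is a compact subset of $X\setminus\{x\}$ with $\Gamma K^*=X\setminus\{x\}$. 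Given $y\neq x$, I first use the cocompact action on $D$ to move the projection of $D(y)$ into $D_0$, reaching a vertex $w\in D_0$, and then apply an element of $P_w\leq\Gamma$ to move the attaching point of $y$ at $w$ into $K_w$; points lying in some $X_w$ and tree-boundary points are handled identically, using that the ends of $D$ are exactly the points already identified with $x$. The two facts that finish the argument, and which I expect to be the technical heart, are that $K^*$ is closed, hence compact, verified through the neighbourhood basis $W_n$, and that it is bounded away from $x$: since each attaching point sits in $K_w$, which is bounded away from $p_w$, no branch of $X$ meeting $K^*$ enters a small neighbourhood $W_n(x)$, so $x\notin\overline{K^*}$. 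Combined with the parabolicity established above, this shows that $x$ is a bounded parabolic point for $G\curvearrowright X$.
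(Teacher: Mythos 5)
Your overall architecture is the same as the paper's: identify the stabilizer $\Gamma$ of $x$ as the group generated by the maximal parabolics $P_w$ for $w$ in the domain tree $D$, use bounded parabolicity of each $p_w$ to get compacta $K_w\subseteq X_w\setminus\{p_w\}$, and assemble a compact fundamental set for $X\setminus\{x\}$ out of the $K_w$ together with everything hanging off the edges whose limit sets meet them. The paper does exactly this after reducing to the amalgam and HNN cases, where it identifies $\Gamma$ with the cone-locus subgroup $P_1\ast_P P_2$ (resp.\ $P\ast_{P_A\simeq P_B}$) acting on $D$ with quotient a single edge; your explicit verification that $\Gamma$ contains no loxodromic is a reasonable addition that the paper leaves implicit.

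The one step where your justification does not go through as stated is the finiteness of $D/\Gamma$. Relative acylindricity constrains stabilizers of \emph{long segments}, and condition (4) constrains intersections of \emph{many} limit sets; neither says anything about the valence of $D$ at a single vertex $w$ modulo $P_w$. What you actually need there is that only finitely many $P_w$-orbits of edges $e'$ at $w$ satisfy $p_w\in\Lambda(G_{e'})$, i.e.\ that there are finitely many double cosets $P_w g G_e$ with $P_w\cap gG_eg^{-1}$ infinite. This is the finiteness of the induced peripheral structure on the relatively quasi-convex edge group (\cite[Theorem 9.1]{hru-rel}, already invoked in the paper to define that structure), and it is what makes $\Gamma$ the fundamental group of a \emph{finite} subgraph of parabolic groups --- the cone-locus subgroup --- with $D$ as its Bass--Serre tree. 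Once you route the finiteness of $D/\Gamma$ through that fact rather than through acylindricity and condition (4) (whose actual role in this lemma is only to guarantee that the ends of the infinite domain $D$ in $\partial T$ are the points identified with $x$, so they need not be covered by $\Gamma K^*$), the rest of your construction of $K^*$ and the verification that it is compact and misses $x$ matches the paper's argument.
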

	\begin{proof}
		It is sufficient to consider the amalgamated free product and HNN extension case:
		
		{\bf Amalgam case:} Let $p$ be a bounded parabolic point for a vertex group $G_v$ in $X_v$, which is not in any edge boundary attached to $X_v$. We denote $\pi(p)$ by $p$. Let $D(p) = \{v\}$. Let $P$ be the stabilizer of $p$ in $G$. Since $P$ fixes the vertex $v$, $P \leq G_v$. Note that $P$ is the stabilizer of $p$ in $G_v$. As $p$ is bounded parabolic point for $G_v$ in $X_v$, $P$ acts co-compactly on $X_v\setminus\{p\}$. Let $K$ be a compact subset of $X_v\setminus\{p\}$ such that $PK = X_v\setminus\{p\}$. Consider $\EE$, the set of edges whose boundaries intersect $K$. Let $e$ be the edge with one vertex $v$. Then there exists $h\in P$ such that $X_e \cap hK \ne \emptyset$. Therefore, the set of edges $\cup_{h\in P} h\EE $ contains every edge with one and only one vertex $v$. Let $\VV$ be the set of vertices $w$ of the tree $T$ such that the first edge of $[v,w]$ is in $\EE$. Let $K'$ be the subset of $X$ consisting of points whose domains are in $\overline{\VV}$. Define $K'' = K\cup K'$. As the sequence of points in $K'$ has the limit in $K$, $K''$ is a compact subset of $X$. It is clear that $PK'' = X\setminus \{p\}$. Thus, $p$ is a bounded parabolic point for $G\curvearrowright X$.
		
		Case(2) Suppose $p$ is an element of an edge boundary. In this case, $D(p)$ is either infinite or singleton (it is singleton when stabilizers of $p$ in adjacent vertex groups are in the edge group). In any case, proof will be the same. Suppose that $v_1$ and $v_2$ are vertices of an edge $e$. Let $X_e = \{p\}$ and $P_1,P_2$ be maximal parabolic subgroups in vertex groups $G_{v_1},G_{v_2}$ respectively, and $P$ is maximal parabolic in edge group $G_e$. Then, the domain of $p$, $D(p)$ is nothing but the Bass-Serre tree of the amalgam $Q=P_1\ast_P P_2$, which is the stabilizer of $p$ in $G$. Under the action of $Q$, the quotient of $D(p)$ is the edge $e$. Since $P_1,P_2$ acts co-compactly on $X_{v_1}\setminus\{p\},X_{v_2}\setminus\{p\}$ respectively, there exists a compact subset $K_i$ of $X_{v_i}\setminus\{p\}$ such that $P_iK_{i} = X_{v_i}$ for $i=1,2$. Consider $\EE_i$, the edges starting at $v_i$ whose boundary intersects $K_i$ but does not contain $p$. Let $e$ be an edge with only one vertex in $D(p)$ and $v_i$ be this vertex. Then there exists $h\in P_i$ such that $X_e\cap hK_i \ne \emptyset$ for $i=1,2$. Therefore the set of edges $\cup_{i=1,2} Q\EE_i$ contains every edge with one and only one vertex in $D(p)$. Let $\VV_i$ be the set of vertices $w$ such that the first edge of $[v_i,w]$ is in $\EE_i$. Let $K_i'$ be the subset of $X$ consisting of the points whose domain is in $\VV_i$ for $i=1,2$. Since a sequence of points in the spaces corresponding to the stabilizers of distinct edges in $\EE_i$ have all accumulation points in $K_i$, the set $K_i''= K_i\cup K_i'$ is a compact for $i=1,2$. Hence $K=\cup_{i=1,2}K_i''$ is a compact set of $X$ not containing $p$ and $QK =X\setminus\{p\}$. Therefore $p$ is a bounded parabolic point for $G$ in $X$.
		
		{\bf HNN extension:} Case(1) Let $G=G_{v}{\ast}_{A\simeq B}$ where $B$ is isomorphic to a subgroup of $A$. Let $p$ be a bounded parabolic point in edge group and let $P,P_A$ and $P_B$ are maximal parabolic subgroups in $G_v,A$, and $B$, respectively. Then, proof remains the same as in the amalgam case except that the maximal parabolic subgroup corresponding to edge boundary point $p$ is $P\ast_{P_A\simeq P_B}$, and maximal parabolic subgroups corresponding to parabolic points which are not in any edge spaces are maximal parabolic for $G$ in $X$.
		
		Case(2) Let $G=G_{v}{\ast}_{A\simeq B}$ where $B$ is isomorphic to $A$ (not a subgroup of $A$). Then, we can write $G$ as $G=(G_v\ast_A B)\ast_B$. Thus, by applying the amalgam and case(1) of HNN extension case respectively, we are done. 
	\end{proof}
	Hence, we have completed the proof of Theorem \ref{combination theorem}. Also, from the construction of Bowditch boundary of $G$, it is clear that vertex groups of $G(\YY)$ are relatively quasi-convex in $G$.
	\section{Proof of Theorem \ref{maintheorem}}\label{3}
	In this section, we give proof of Theorem \ref{maintheorem}. First of all, we observe the following:
	\begin{lemma}\label{rel height rel acyl}
		Let $G$ be the group satisfying the hypotheses of Theorem \ref{maintheorem}. Assume that all edge groups have finite relative height in $G$. Then, the action of $G$ on the Bass-Serre tree of $G(\YY)$ is relatively acylindrical.
	\end{lemma}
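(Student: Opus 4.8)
The plan is to realize the stabiliser of a long geodesic as an intersection of conjugates of edge groups and feed a pigeonhole count into the finite relative height hypothesis. First I would record the combinatorial setup: if $\alpha$ is a geodesic in the Bass--Serre tree $T$ with consecutive edges $e_1,\dots,e_n$, then its pointwise stabiliser is $G_\alpha=\bigcap_{i=1}^n G_{e_i}$, and each edge stabiliser $G_{e_i}$ is a conjugate $g_iH_ig_i^{-1}$ of one of the finitely many edge groups $H_i$ of $G(\YY)$. By the Bass--Serre correspondence, distinct edges of $T$ of the same type correspond to distinct cosets of that edge group. I would also note that, under the hypotheses of Theorem \ref{maintheorem}, each edge group is relatively quasi-convex in $G$ (it is relatively quasi-convex in an adjacent vertex group, which is relatively quasi-convex in $G$ by the construction of $\partial G$, and relative quasi-convexity is transitive), so $\Lambda(G_e)=X_e$ embeds equivariantly in $\partial G$ and the loxodromic/parabolic classification of its elements and subgroups is inherited from $G$.

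Next I would fix the acylindricity constant. Let $k_0$ be the maximum of the relative heights of the (finitely many) edge groups of $G(\YY)$, which is finite by hypothesis, let $N$ be the number of edges of the graph $\YY$ (equivalently, the number of $G$-orbits of edges in $T$), and set $k=(k_0+1)N$. For any geodesic $\alpha$ of length at least $k$, the pigeonhole principle yields $k_0+1$ edges $e_{i_1},\dots,e_{i_{k_0+1}}$ of a single type $H$, hence $k_0+1$ \emph{distinct} cosets $g_{i_1}H,\dots,g_{i_{k_0+1}}H$ with $G_{e_{i_j}}=g_{i_j}Hg_{i_j}^{-1}$. Since $G_\alpha\subseteq\bigcap_{j}g_{i_j}Hg_{i_j}^{-1}$ and the relative height of $H$ is at most $k_0$, Definition \ref{rel height} tells us this intersection of $k_0+1$ distinct conjugates contains no loxodromic element of $G$; a fortiori $G_\alpha$ contains no loxodromic element.

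Finally I would upgrade ``$G_\alpha$ contains no loxodromic element'' to the conclusion of Definition \ref{rel acyl}. If $G_\alpha$ is finite, the $k$-acylindricity clause holds along $\alpha$ and there is nothing more to prove. Otherwise $G_\alpha$ is an infinite subgroup of the relatively hyperbolic group $G$ with no loxodromic element, so by the standard classification of subgroups of relatively hyperbolic (convergence) groups it is parabolic, fixing a unique point $p\in\partial G$. For each edge $e$ of $\alpha$ we have $G_\alpha\leq G_e$, hence $\{p\}=\Lambda(G_\alpha)\subseteq\Lambda(G_e)=X_e$; thus $p$ is a parabolic point for the action of $G_e$ on $X_e$, and $G_\alpha$ lies in the corresponding maximal parabolic subgroup of $G_e$. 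So $G_\alpha$ is parabolic in $G_e$ for every edge $e$ of $\alpha$, which is exactly the assertion that the action is $k$-relatively acylindrical.

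The step I expect to be the main obstacle is the dynamical matching in the last paragraph: one must check that ``loxodromic'' and ``parabolic'' are being used consistently for $G$ and for the edge groups $G_e$, so that a $G$-parabolic subgroup contained in $G_e$ is genuinely parabolic for the $G_e$-action. This is precisely where condition (2) (the induced parabolic structures on the edge groups agree) and the relative quasi-convexity of the edge groups in $G$ are essential, ensuring that $\Lambda(G_e)$ sits equivariantly inside $\partial G$ and that the limit set $\Lambda(G_\alpha)=\{p\}$ lands inside $X_e$. Once this compatibility is in place, the remaining pigeonhole and Bass--Serre bookkeeping is routine.
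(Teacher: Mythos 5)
Your core argument is the same as the paper's, which is much terser: the paper simply observes that a sufficiently long geodesic forces its stabilizer into the intersection of more than $l$ distinct conjugates of a single edge group (where $l$ bounds the relative heights), so that by Definition \ref{rel height} the stabilizer contains no loxodromic element. Your pigeonhole bookkeeping with $k=(k_0+1)N$ and the identification of distinct edges in one $G$-orbit with distinct cosets is exactly the intended quantitative version of this, and it is correct.

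The one place you go beyond the paper --- upgrading ``$G_\alpha$ contains no loxodromic'' to ``$G_\alpha$ is parabolic in $G_e$ for each edge $e$ of $\alpha$'', as Definition \ref{rel acyl} literally requires --- is where your justification has a circularity. You invoke the relative quasi-convexity of the edge groups in $G$, deduced from the relative quasi-convexity of the vertex groups ``by the construction of $\partial G$''. But that construction is Theorem \ref{combination theorem}, whose hypothesis (3) is precisely the relative acylindricity this lemma is meant to supply; in the paper's logical order, relative quasi-convexity of vertex and edge groups in $G$ is only available \emph{after} the lemma. What you actually need is the weaker compatibility statement that a subgroup of $G_e$ which is parabolic for $G$ (i.e.\ contained in a conjugate of a cone-locus subgroup) is parabolic for the peripheral structure on $G_e$ induced from the adjacent vertex groups; this should be argued directly from the description of the cone-locus subgroups as fundamental groups of subgraphs of maximal parabolics, not from the boundary construction. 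To be fair, the paper's own proof suppresses this step entirely, so your instinct to address it is right --- only the route you chose presupposes the conclusion of the theorem being proved.
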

	\begin{proof}
		Since $G(\YY)$ is a finite graph of groups, the relative height of all edge groups is some fixed number $l$ (say). Then, for sufficiently large $k$, the stabilizer of length $k$ geodesic segment contains more than $l$ conjugates of some fixed edge stabilizer. Hence, by definition of relative height, we see that the action of $G$ on the Bass-Serre tree of $G(\YY)$ is relatively acylindrical.
	\end{proof}
	For the convenience of the reader, we recall Theorem \ref{maintheorem} here.
	\begin{theorem}
		Let $G(\YY)$ be a finite graph of groups satisfying the following:
		\begin{enumerate}
			\item vertex groups are relatively hyperbolic, and edge groups are relatively quasi-convex in adjacent vertex groups,
			\item the induced parabolic structures on each edge group from adjacent vertex groups are the same,
			\item $G=\pi_1(G(\YY))$ is relatively hyperbolic with respect to subgroups corresponding to natural cone-locus,
			\item For $n\geq k$, $\bigcap\limits_{i=1}^{n}\Lambda(G_{e_i})=\emptyset$ if $\bigcap\limits_{i=1}^{n} G_{e_i}$ is finite and $\bigcap\limits_{i=1}^{n}\Lambda(G_{e_i})$ is a singleton if $\bigcap\limits_{i=1}^{n} G_{e_i}$ is infinite, where $k$ is some fixed natural number.
		\end{enumerate}
		Then, edge groups have finite relative height in $G$ if and only if they are relatively quasi-convex in $G$.
	\end{theorem}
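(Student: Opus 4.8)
The statement is a biconditional, so I would prove the two implications separately; they are of quite different character.

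The direction ``relatively quasi-convex $\Rightarrow$ finite relative height'' needs nothing specific to the graph-of-groups structure. It is exactly the Hruska--Wise result \cite{hruska-wise} that every relatively quasi-convex subgroup of a relatively hyperbolic group has finite relative height. Since hypothesis (3) makes $(G,\PP)$ relatively hyperbolic with $\PP$ the natural cone-locus subgroups, and since $G(\YY)$ is finite (so there are only finitely many edge groups up to conjugacy), applying \cite{hruska-wise} to each edge group gives this direction at once.

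The content is in the converse, and my plan is to convert the finite-height hypothesis into relative acylindricity and then apply the combination theorem. Assume every edge group has finite relative height in $G$. By Lemma \ref{rel height rel acyl}, this already forces the action of $G$ on the Bass--Serre tree $T$ to be relatively acylindrical. With relative acylindricity secured, all four hypotheses of Theorem \ref{combination theorem} hold: (1), (2), (4) are verbatim the hypotheses here, and hypothesis (3) of the combination theorem is the relative acylindricity just produced. Theorem \ref{combination theorem} then gives that $G$ is relatively hyperbolic with respect to the natural cone-locus subgroups and, crucially, that each vertex group is relatively quasi-convex in $G$. By the uniqueness of a relatively hyperbolic structure with a prescribed peripheral collection, the boundary built in the combination theorem is equivariantly identified with the Bowditch boundary $\partial G$ supplied by hypothesis (3), so the two relatively hyperbolic structures do not conflict.

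It then remains to promote relative quasi-convexity of the vertex groups to that of the edge groups, and for this I would use transitivity of relative quasi-convexity: each edge group $G_e$ is relatively quasi-convex in an adjacent vertex group $G_v$ by hypothesis (1), and $G_v$ is relatively quasi-convex in $G$ by the previous step, so $G_e$ should be relatively quasi-convex in $G$. In the dynamical language used here this means composing the equivariant embeddings $\partial G_e \cong \Lambda(G_e) \subset \partial G_v$ and $\partial G_v \cong \Lambda(G_v) \subset \partial G$, and then checking that the resulting copy of $\partial G_e$ inside $\partial G$ is genuinely the limit set of $G_e$ in $\partial G$, which follows from the minimality characterization in Definition \ref{limit-set, rel qc}(1); geometric finiteness of the $G_e$-action on this nested limit set is inherited from that of $G_e$ on $\partial G_e$. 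The step I expect to be the main obstacle is exactly this transitivity: one must check that the peripheral structure induced on $G_e$ as a subgroup of $G$ matches the one it already carries inside $G_v$, and this is precisely what hypothesis (2) --- equality of the induced parabolic structures on each edge group --- is there to ensure. Once the peripheral structures are reconciled, the composition of geometrically finite convergence actions is again geometrically finite on the nested limit set, completing the converse.
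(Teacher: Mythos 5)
Your proposal follows essentially the same route as the paper: the forward direction is the Hruska--Wise result \cite{hruska-wise}, and the converse uses Lemma \ref{rel height rel acyl} to convert finite relative height into relative acylindricity, then Theorem \ref{combination theorem} to obtain relative quasi-convexity of the vertex groups, and finally transitivity of relative quasi-convexity to pass to the edge groups. The only difference is that the paper dispatches the last step by citing \cite[Lemma 2.3]{bigdely-wise}, whereas you sketch the transitivity argument (including the reconciliation of peripheral structures via hypothesis (2)) by hand; this is consistent with that lemma.
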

	\begin{proof}
		If edge groups are quasi-convex, then, by \cite{hruska-wise}, they have finite relative height.
		Conversely, assume that edge groups have a finite relative height. Then, by Lemma \ref{rel height rel acyl}, we see that the action of $G$ on the Bass-Serre tree of $G(\YY)$ is relatively acylindrical. In condition(4), we take $k$ as relative acylindricity constant. Now, $G(\YY)$ satisfies the hypotheses of Theorem \ref{combination theorem}. Thus, $G$ is relatively hyperbolic with respect to the same parabolic structure as in our assumption. Also, we have explicit construction of Bowditch boundary $X$ of $G$. From the construction of $X$, we see that all vertex groups are relatively quasi-convex. Thus, all edge groups are relatively quasi-convex by \cite[Lemma 2.3]{bigdely-wise}. Hence the theorem.
	\end{proof}
	\section{Application}
	In \cite{swarup-weakhyp}, Swarup proved that if a hyperbolic group $G$ splits as $A\ast_C B$ or $A\ast_C$, where $A,B,C$ are quasi-convex in $G$ then a finitely generated subgroup $H$ of $G$ is quasi-convex if and only if the intersection of $H$ with conjugates of $A,B,C$ in $G$ is quasi-convex in $G$ \cite[Theorem 2]{swarup-weakhyp}. In \cite[Theorem 5.4]{pal-rel-height}, Pal generalized the theorem of Swarup in the setting of relatively hyperbolic groups with some restrictions on parabolics. This section provides a proof same result in a more general setting.
	\begin{theorem}\label{qc}
		Let $G(\YY)$ be a graph of groups satisfying the hypotheses of Theorem \ref{combination theorem} and let $H$ be a finitely generated subgroup of $G$. Then, $H$ is relatively quasi-convex in $G$ if and only if the intersection of $H$ with conjugates of vertex groups in $G$ is relatively quasi-convex in vertex groups.
	\end{theorem}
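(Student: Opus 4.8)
The plan is to prove both implications using the description of relative quasi-convexity as geometric finiteness on the limit set (Definition \ref{limit-set, rel qc}(2)) together with the combination theorem (Theorem \ref{combination theorem}).

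For the forward direction, suppose $H$ is relatively quasi-convex in $G$. By Theorem \ref{combination theorem} every vertex group is relatively quasi-convex in $G$, hence so is each conjugate $gG_vg^{-1}$, relative quasi-convexity being conjugation invariant. I would then invoke the fact that an intersection of two relatively quasi-convex subgroups of a relatively hyperbolic group is again relatively quasi-convex (a standard fact, see \cite{hru-rel}), so that $H\cap gG_vg^{-1}$ is relatively quasi-convex in $G$. Finally, since $H\cap gG_vg^{-1}\leq gG_vg^{-1}$ and $gG_vg^{-1}$ is relatively quasi-convex in $G$, the transitivity (restriction) statement \cite[Lemma 2.3]{bigdely-wise} already used in the proof of Theorem \ref{maintheorem} shows that $H\cap gG_vg^{-1}$ is relatively quasi-convex in $gG_vg^{-1}$, as required.

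For the converse, assume that $H\cap gG_vg^{-1}$ is relatively quasi-convex in $gG_vg^{-1}$ for every vertex $v$ and every $g\in G$. First I would realize $H$ as the fundamental group of a \emph{finite} graph of groups: fixing a finite generating set of $H$ and a base vertex $v_0\in T$, the union of the finitely many geodesics $[v_0,hv_0]$ over the generators $h$ spans a finite subtree $S_0$, and $S=H\cdot S_0$ is a connected $H$-invariant subtree of $T$ with finitely many $H$-orbits of simplices. Bass--Serre theory applied to $H\curvearrowright S$ then exhibits $H$ as $\pi_1$ of a finite graph of groups whose vertex groups are the stabilizers $H\cap gG_vg^{-1}$ and whose edge groups are the $H\cap gG_eg^{-1}$.

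It remains to check that this graph of groups satisfies the hypotheses of Theorem \ref{combination theorem}. The vertex groups $H\cap gG_vg^{-1}$ are relatively hyperbolic, being relatively quasi-convex in $gG_vg^{-1}$ by assumption; each edge group $H\cap gG_eg^{-1}=(H\cap gG_vg^{-1})\cap gG_eg^{-1}$ is an intersection of two relatively quasi-convex subgroups of $gG_vg^{-1}$, hence relatively quasi-convex there, and so relatively quasi-convex in the vertex group $H\cap gG_vg^{-1}$ by \cite[Lemma 2.3]{bigdely-wise}. The matching of induced parabolic structures is inherited from $G(\YY)$; the relative acylindricity of $H\curvearrowright T$ is inherited from that of $G\curvearrowright T$ with the same constant; and since $\Lambda(H\cap gG_eg^{-1})\subseteq \Lambda(gG_eg^{-1})$, condition (4) for $H$ follows from condition (4) for $G(\YY)$. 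Theorem \ref{combination theorem} then shows $H$ is relatively hyperbolic and, more importantly, furnishes an explicit model of its Bowditch boundary $\partial H$ glued from the boundaries $\Lambda(H\cap gG_vg^{-1})\subseteq X_v$ exactly as $X$ was glued from the $X_v$. The resulting $H$-equivariant map $\partial H\to X=\partial G$ has image $\Lambda(H)$, and I expect the main work to be checking that this map is an equivariant homeomorphism onto $\Lambda(H)$, i.e. that the topology produced by the boundary construction for $H$ agrees with the subspace topology inherited from $X$. Granting this, Definition \ref{limit-set, rel qc}(2) gives that $H$ is relatively quasi-convex in $G$, completing the proof.
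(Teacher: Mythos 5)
Your proposal follows essentially the same route as the paper: the forward direction via relative quasi-convexity of the vertex groups in $G$ and \cite[Lemma 2.3]{bigdely-wise}, and the converse by giving $H$ an induced finite graph-of-groups structure from its action on the Bass--Serre tree, verifying the hypotheses of Theorem \ref{combination theorem}, and identifying the resulting boundary of $H$ with $\Lambda(H)\subseteq X$. The one step you leave as ``granting this'' --- that the constructed boundary of $H$ sits inside $X$ as the limit set with the correct topology --- is exactly the step the paper also asserts from the construction without further detail, so your write-up matches the paper's argument in both substance and level of rigor.
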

	\begin{proof}
		If $H$ is relatively quasi-convex in $G$, then the conclusion follows as vertex groups of $G(\YY)$ are relatively quasi-convex in $G$ (see \cite[Lemma 2.3]{bigdely-wise}). Conversely, assume that the intersection of $H$ with conjugates of vertex groups in $G$ is relatively quasi-convex in vertex groups. Since $H$ is a finitely generated subgroup, $H$ admits a finite subgraph of groups $G(\YY_1)$ structure whose vertex groups are the intersection of $H$ with conjugates of vertex groups of $G(\YY)$ and edges groups are the intersection of $H$ with conjugates of edge groups of $G(\YY)$. Then, by our assumption, $G(\YY_1)$ is a graph of relatively hyperbolic groups with relatively quasi-convex edge groups. Note that the action of $H=\pi_1(G(\YY_1))$ on Bass-Serre tree $T_1$ of $G(\YY_1)$ is relatively acylindrical. Thus, $G(\YY_1)$ satisfies all the conditions of Theorem \ref{combination theorem}. Therefore, $H$ is a relatively hyperbolic group with explicit construction of Bowditch boundary, say $X_1$. Also, from the construction of boundaries, $X_1$ is a minimal closed $H$-invariant subset of $X$. Thus, $X_1$ is the limit set of $H$ in $G$. Since $H$ acts geometrically finitely on $X_1$, $H$ is a relatively quasi-convex subgroup of $G$.
	\end{proof}
	A relatively hyperbolic group is said to be locally quasi-convex if every finitely generated subgroup is relatively quasi-convex. Let $G(\YY)$ be a graph of groups as in Theorem \ref{maintheorem}. Additionally, assume that each vertex group of $G(\YY)$ is locally quasi-convex. Let $H$ be any finitely generated subgroup of $G$. Assume that the intersection of $H$ with conjugates of edge groups is finitely generated in edge groups. Then, we have the following:
	\begin{prop}
		$G$ is a locally quasi-convex relatively hyperbolic group.
	\end{prop}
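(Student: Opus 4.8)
Since $G$ is relatively hyperbolic by hypothesis (condition (3) of Theorem \ref{maintheorem}), it suffices to prove that an arbitrary finitely generated subgroup $H \le G$ is relatively quasi-convex, and the natural tool is Theorem \ref{qc}: one need only check that the intersection of $H$ with every conjugate of a vertex group is relatively quasi-convex in that vertex group. The plan is therefore to produce the induced splitting of $H$, verify that its vertex groups are finitely generated, and then feed local quasi-convexity of the vertex groups into Theorem \ref{qc}.

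First I would let $H$ act on the Bass-Serre tree $T$ of $G(\YY)$. Because $H$ is finitely generated, the convex hull of $\{v_0, s v_0 : s \in S\}$ for a finite generating set $S$ spans an $H$-invariant subtree with finite quotient, so Bass-Serre theory equips $H$ with a finite graph of groups $G(\YY_1)$ whose vertex groups are the subgroups $H \cap gG_vg^{-1}$ and whose edge groups are the subgroups $H \cap gG_eg^{-1}$; this is exactly the decomposition already used in the proof of Theorem \ref{qc}.

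The crux is to promote the vertex groups of $G(\YY_1)$ to finitely generated groups. By hypothesis the edge groups $H \cap gG_eg^{-1}$ are finitely generated; combined with the finiteness of $\YY_1$ and the finite generation of $H$, the standard fact that a finitely generated fundamental group of a finite graph of groups with finitely generated edge groups has finitely generated vertex groups shows that every $H \cap gG_vg^{-1}$ is finitely generated. (This fact follows from a normal-form reduction: one enlarges a finite generating set of $H$ so that it contains generators of the edge groups, after which every element of a vertex group reduces to a word in finitely many vertex-group elements.) This is the step I expect to be the main obstacle, as it is precisely where the hypothesis on the intersections with the edge groups is consumed; without it one could not rule out an infinitely generated vertex group in $G(\YY_1)$.

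Once finite generation is secured, the remainder is formal. Each conjugate vertex group $gG_vg^{-1}$ is isomorphic to $G_v$, which is locally quasi-convex by assumption; hence the finitely generated subgroup $H \cap gG_vg^{-1}$ is relatively quasi-convex in $gG_vg^{-1}$. As this holds for every conjugate of every vertex group, Theorem \ref{qc} yields that $H$ is relatively quasi-convex in $G$. Since $H$ was an arbitrary finitely generated subgroup, $G$ is locally quasi-convex, and being relatively hyperbolic it is a locally quasi-convex relatively hyperbolic group, as claimed.
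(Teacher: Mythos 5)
Your proposal is correct and follows essentially the same route as the paper: induce a finite graph-of-groups decomposition of $H$ from its action on the Bass--Serre tree, use finite generation of $H$ and of the edge intersections to get finitely generated vertex groups (the paper cites \cite[Lemma 2.5]{bigdely-wise} for exactly the step you sketch), invoke local quasi-convexity of the vertex groups of $G(\YY)$, and conclude via Theorem \ref{qc}.
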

	\begin{proof}
		Let $H$ be a finitely generated subgroup of $G$. Then, $H$ admits a finite subgraph of groups $G(\YY_1)$ structure with finitely generated edge groups. Then, by \cite[Lemma 2.5]{bigdely-wise}, vertex groups are also finitely generated. As vertex groups of $G(\YY)$ are locally quasi-convex, all vertex groups of $G(\YY_1)$ are relatively quasi-convex. Then, edge groups of $G(\YY_1)$ are also relatively quasi-convex in corresponding edge groups of $G(\YY)$. Applying Theorem \ref{qc}, we see that $H$ is relatively quasi-convex in $G$.
	\end{proof}

\end{document}